\newcommand{\TheTitle}{{Surpassing Gradient Descent Provably: A Cyclic Incremental Method with Linear Convergence Rate}} 
\newcommand{\TheAuthors}{Aryan Mokhtari, Mert G{\"u}rb{\"u}zbalaban, and Alejandro Ribeiro}
\title{{\TheTitle}\thanks{{This work was supported by NSF CAREER CCF-0952867 and ONR N00014-12-1-0997. This paper expands the results and presents proofs that are referenced in \cite{7953047}.}}}
\author{
  Aryan Mokhtari\thanks{Department of Electrical and Systems Engineering, University of Pennsylvania, Philadelphia, PA
    (\email{aryanm@seas.upenn.edu},\email{aribeiro@seas.upenn.edu})}
  \and
  Mert G{\"u}rb{\"u}zbalaban\thanks{Department of Management Science and Information Systems, Rutgers University, Piscataway, NJ (\email{mgurbuzbalaban@business.rutgers.edu}).}
  \and
  Alejandro Ribeiro\footnotemark[2]
}
\newtheorem{assumption}{\hspace{0pt}\bf Assumption}
\newtheorem{remark}{\hspace{0pt}\it Remark}
\begin{document}

\maketitle

\begin{abstract}
Recently, there has been growing interest in developing optimization methods for solving large-scale machine learning problems. Most of these problems boil down to the problem of minimizing an average of a finite set of smooth and strongly convex functions where the number of functions $n$ is large. Gradient descent method (GD) is successful in minimizing convex problems at a fast linear rate; however, it is not applicable to the considered large-scale optimization setting because of the high computational complexity. Incremental methods resolve this drawback of gradient methods by replacing the required gradient for the descent direction with an incremental gradient approximation. They operate by evaluating one gradient per iteration and executing the average of the $n$ available gradients as a gradient approximate. Although, incremental methods reduce the computational cost of GD, their convergence rates do not justify their advantage relative to GD in terms of the total number of gradient evaluations until convergence. In this paper, we introduce a Double Incremental Aggregated Gradient method (DIAG) that computes the gradient of only one function at each iteration, which is chosen based on a cyclic scheme, and uses the aggregated average gradient of all the functions to approximate the full gradient. The iterates of the proposed DIAG method uses averages of both iterates and gradients in oppose to classic incremental methods that utilize gradient averages but do {\it not} utilize iterate averages. We prove that not only the proposed DIAG method converges linearly to the optimal solution, but also its linear convergence factor justifies the advantage of incremental methods on GD. In particular, we prove that the worst case performance of DIAG is better than the worst case performance of GD. Numerical experiments on quadratic programming and logistic regression problems showcase the advantage of DIAG relative to GD and other incremental methods.
\end{abstract}

\begin{keywords}
Incremental methods, finite sum minimization, large-scale optimization, linear convergence rate, worst case analysis \end{keywords}

\begin{AMS}
{  90C06, 90C25, 90C30, 90C52}
\end{AMS}


\section{Introduction}\label{sec_Introduction}
This paper focuses on finite sum optimization where the objective function can be written as the sum of a set of strongly convex functions. In particular, consider $\bbx\in \reals^p$ as the optimization variable and $f_i:\reals^p\to\reals$ as the $i$-th available function. We aim to find the minimizer of the average function $f(\bbx)=({1}/{n}) \sum_{i=1}^n f_{i} (\bbx)$, i.e., we intend to solve the optimization problem
\begin{equation}\label{org_prob}
\bbx^* = \argmin_{\bbx\in \reals^p} f(\bbx) := \argmin_{\bbx\in \reals^p} \frac{1}{n} \sum_{i=1}^n f_{i} (\bbx).
\end{equation}
In this paper, we refer to $f_i$ as the instantaneous functions and the average function $f$ as the global objective function. This class of optimization problems arises in many fields such as machine learning \cite{BottouCun,bottou2010large,SS,cevher2014convex}, optimal control,
\cite{Bullo2009,Cao2013-TII,LopesEtal8}, and wireless communications
\cite{Ribeiro10, Ribeiro12}. Our focus is on problems where the instantaneous functions $f_i$ are smooth and strongly convex. 

To explain the contribution of this paper we have to discuss the rate and constants that characterize convergence of the different first order methods that can be used to solve the problem in \eqref{org_prob}. To begin with we can neglect the specific form of $f$ and use the conventional gradient descent (GD) method which is known to converge linearly to the optimal argument \cite{nesterov2004introductory}. This linear convergence rate comes from using individual iterations that are very costly when the number of functions $n$ is large and motivates the use of stochastic and incremental methods in which only one of the instantaneous gradients $\nabla f_{i}$ is evaluated at each iteration. The selection is random in stochastic methods and cyclic in incremental methods. In either case the idea is that individual iterations are less efficient but since $n$ stochastic or incremental operations have the same cost as one GD iteration, overall convergence is faster.

Although faster convergence is observed in many practical situations, it is not known if it is possible to design a stochastic or incremental method with convergence guarantees that are better than the convergence guarantees of GD. The stochastic gradient descent (SGD) method \cite{robbins1951stochastic,bottou2010large}, for instance, is known to have a sublinear convergence rate and is therefore surpassed by regular GD as the number of iterations grows. This limitation is in fact the motivation for alternative stochastic descent methods that achieve linear convergence rates by  reducing the variance of stochastic descent directions. Examples of this growing and consequential literature includes stochastic averaging gradient algorithms \cite{roux2012stochastic, defazio2014saga, defazio2014finito,mokhtari2016dsa}, variance reduction methods \cite{johnson2013accelerating, xiao2014proximal}, dual coordinate methods \cite{ shalev2013stochastic, shalev2016accelerated}, hybrid algorithms \cite{zhang2013linear, konevcny2013semi}, and majorization-minimization algorithms \cite{mairal2015incremental}. All of these stochastic methods are successful in achieving a linear convergence rate in expectation with individual iterations that have cost comparable to the cost of SGD iterations. However, the linear convergence constants of these methods are not necessarily better than the linear convergence constant of GD for a problem with comparable condition number. This leaves open the possibility that the worst case performance of these methods is worse than the worst case performance of GD -- see Section \ref{sec_prel}.

Given that the only difference between stochastic and incremental methods is that in the latter functions are chosen in a \textit{cyclic} order -- as opposed from the selection in stochastic methods which is uniformly at random -- it is not surprising that analogous statements can be made for incremental gradient descent methods (IGD) \cite{bertsekas1997new, tseng1998incremental, nedic2001incremental, rabbat2005quantized,blatt2007convergent,ram2009incremental,johansson2009randomized, bertsekas2011incremental, tseng2014incrementally, gurbuzbalaban2015convergence, vanli2016global}. Standard IGD has a slow sublinear convergence rate, which motivates the  introduction of memory. This is done in the definition of the incremental aggregated gradient (IAG) method that is shown to achieve linear convergence \cite{gurbuzbalaban2015convergence} but with a constant that is not necessarily better that the GD constant. Thus, and as in the case of stochastic methods, it is possible that the worst case performance of IAG is worse than the worst case performance of GD -- see Section \ref{sec_prel}.

The contribution of this paper is to introduce a first order incremental method that has a linear convergence rate with a constant that is better than the GD constant of a problem with comparable condition number. This means that the worst case performance of the proposed algorithm is guaranteed to be no worse than the worst case performance of GD. The algorithm relies on keeping memory of past variable {\it and} gradient evaluations and is therefore termed the Double Incremental Aggregated Gradient (DIAG) method to emphasize the difference with regular IAG methods in which only gradient histories are maintained. This major difference comes from the fact that DIAG uses a different approximation of the global function $f$ at each iteration from the one used in IAG. In particular, DIAG approximates each instantaneous function $f_i$ by the sum of its first order approximation and a proximity term, both evaluated with respect to the same iterate, whereas IAG uses different points for the first-order approximation and the proximity condition. We show that this critical difference leads to an incremental algorithm with a smaller linear convergence factor. Moreover, the linear convergence factor of the proposed DIAG method justifies the use of incremental methods to improve the performance of GD. In particular, we show that the worse case scenario of DIAG is guaranteed to be better than the worse case scenario of GD. Based on our knowledge, this is the first incremental method which is guaranteed to improve the worse case performance of GD.

We start the paper by presenting the GD and IAG methods and studying their convergence guarantees for the case that the instantaneous functions $f_i$ are strongly convex and their gradients $\nabla f_i$ are Lipschitz continuous (Section \ref{sec_prel}). We clarify the reason that the convergence analysis of IAG cannot guarantee the advantage of incremental methods with respect to GD. Then, we present the proposed DIAG method which uses both variable and gradient averages in oppose to IAG that only uses aggregated gradient average (Section \ref{sec_algorithm}). We explain the intuition behind this difference by comparing the function approximations used in these methods. Further, we suggest an efficient mechanism to implement the proposed DIAG algorithm that has the computational complexity of the order $O(p)$ which is significantly lower than of GD given by $\mathcal{O}(np)$ (Section \ref{impl_sec}). Further, we explain the connection between the proposed DIAG method and the majorization-minimization method (MISO) proposed in \cite{mairal2015incremental}, and highlight the differences between these two algorithms (Remark \ref{remark:MISO}).

The convergence analysis of the DIAG method is then presented (Section \ref{sec_Convergence}). We first prove a fundamental lemma that shows the error of DIAG at each iteration is strictly smaller than the average of the errors of the last $n$ iterations (Lemma \ref{fundamental_lemma}). We use this result to prove that the sequence of variables generated by DIAG converges to the optimal argument $\bbx^*$ (Proposition \ref{CIAG_better_than_GD_lemma}), and, in particular, the convergence rate of the iterates evaluated after each pass over the dataset is linear (Corollary \ref{imp_cor}). This linear convergence factor guarantees that one pass of DIAG is more efficient than one iteration of gradient descent, i.e., the upper bound for the error of DIAG after $n$ gradient evaluations is strictly smaller than the one for GD. Then, we prove that the whole sequence of DIAG iterates is linearly convergent (Theorem \ref{thm_lin}) and characterize the linear convergence factor (Theorem \ref{thm_best_lin}). We extend our convergence results by studying the worst-case asymptotic rate of DIAG (Section \ref{sec:worst_case}). We use the Perron-Frobenius (PF) theory to show that an upper bound for the sequence of DIAG errors has an asymptotic linear convergence rate which is strictly better than the linear convergence factor of GD (Theorem \ref{theorem-rate-bounds}). 

We compare the performances of DIAG, GD, and IAG in solving a quadratic programming and a binary classification problem (Section \ref{sec:simulations}). Numerical results for the quadratic programming confirm that DIAG outperforms GD. In particular, the relative performance of DIAG and GD does not vary by changing the problem condition number, while IAG is not preferable to GD when the problem condition number is relatively large.  Moreover, DIAG outperforms IAG irrespective to the problem parameters. The convergence paths of these methods for the binary classification problem, which is a logistic regression minimization, confirm the observations for the quadratic programming problem. Finally, we close the paper by concluding remarks (Section \ref{sec:conclusions}).

\subsection{Notation}Vectors are written as $\bbx\in\reals^p$ and matrices as $\bbA\in\reals^{p\times p}$.
Given $n$ vectors $\bbx_i$, the vector $\bbx=[\bbx_1;\ldots;\bbx_n]$ represents a stacking of the
elements of each individual $\bbx_i$. We use $\|\bbx\|$ and  $\|\bbA\|$ to denote the Euclidean norm of vector $\bbx$ and matrix $\bbA$, respectively. Given a function $f$ its gradient $\bbx$ is denoted as $\nabla f(\bbx)$. 

\section{Related Works and Preliminaries}\label{sec_prel}

Since the objective function in \eqref{org_prob} is convex, descent methods can be used to find the optimal argument $\bbx^*$. In this paper, we are interested in studying methods that converge to the optimal argument of the global objective function $f$ at a linear rate. It is customary for the linear convergence analysis of first-order methods to assume that the functions are smooth and strongly convex. We formalize these conditions in the following assumption.

\begin{assumption} \label{cnvx_lip}
The functions $f_i$ are differentiable and strongly convex with constant $\mu>0$, i.e., for all $\bbx,\bby\in \reals^p$ we can write 
\begin{equation}
(\nabla f_{i}(\bbx)-\nabla f_{i}(\bby))^T(\bbx-\bby)
\geq 
\mu\|\bbx-\bby\|^2.
\end{equation} 
Moreover, the gradients $\nabla f_i$ are Lipschitz continuous with constant $L<\infty$, i.e., for all $\bbx,\bby\in \reals^p$ we have
\begin{equation}
\|\nabla f_{i}(\bbx)-\nabla f_{i}(\bby)\|
\leq 
L\|\bbx-\bby\|.
\end{equation} 
\end{assumption}

The strong convexity of the functions $f_i$ with constant $\mu$ implies that the global objective function $f$ is also strongly convex with constant $\mu$. Likewise, the Lipschitz continuity of the gradients $\nabla f_i$ with constant $L$ yields Lipschitz continuity of the global objective function gradients $\nabla f$ with constant $L$. Note that the conditions in Assumption \ref{cnvx_lip} are mild and hold for most large-scale machine learning applications such as, linear regression, logistic regression, least squares, and support vector machines.

The optimization problem in \eqref{org_prob} can be solved using the gradient descent (GD) method \cite{nesterov2004introductory}. The idea of GD is to update the current iterate $\bbx^k$ by descending through the negative direction of the current gradient $\nabla f(\bbx^k)$ with a proper stepsize $\eps^k$. In other words, the update of GD for solving problem \eqref{org_prob} at step $k$ is defined as 
\begin{equation}\label{GD_update}
\bbx^{k+1}=\bbx^k-\eps^k\ \!  \nabla f(\bbx^k)=\bbx^k-\frac{\eps^k}{n} \sum_{i=1}^n  \nabla f_i(\bbx^k).
\end{equation}
Convergence analysis of GD in \cite{nesterov2004introductory} shows that the sequence of iterates $\bbx^k$ converges linearly to the optimal argument $\bbx^*$ if the stepsize is constant and satisfies $\eps^k=\eps<2/L$. The fastest convergence rate is achieved by the stepsize $\eps=2/(\mu+L)$ which leads to the linear convergence factor $(\kappa-1)/(\kappa+1)$, i.e., 
\begin{equation}\label{GD_rate}
\|\bbx^{k}-\bbx^*\|  \leq \left( \frac{\kappa-1}{\kappa+1}\right)^k\ \! \|\bbx^0-\bbx^*\|,
\end{equation}
where $\kappa:=L/\mu$ is the global objective function condition number. Although, GD has a fast linear convergence rate, it is not computationally affordable in large-scale applications because of its high computational complexity. To comprehend this limitation, note that each iteration of GD requires $n$ gradient evaluations which is not computationally affordable in large-scale applications with massive values of $n$. Stochastic gradient descent (SGD) arises as a natural solution in large-scale settings. SGD modifies the update of GD by approximating the gradient of the global objective function $\nabla f$ by the average of a small number of instantaneous gradients chosen uniformly at random from the set of $n$ gradients. To be more precise, the update of SGD at step $k$ is defined as 
\begin{equation}\label{SGD_update}
\bbx^{k+1}=\bbx^k-\frac{\eps^k}{b} \sum_{i\in \ccalS_b^k} \nabla f_i(\bbx^k),
\end{equation}
where $\ccalS_b^k$ is defined as a random set that contains the indices of $b$ functions that are chosen for the update SGD at step $k$. Note that the components of the set $\ccalS_b^k$ are chosen uniformly at random from the set of indices $\{1,2,\dots,n\}$. Since the stochastic gradient $(1/{b} )\sum_{i\in \ccalS_b^k} \nabla f_i(\bbx^k)$ is an unbiased estimator of the gradient $\nabla f(\bbx^k)=(1/n)\sum_{i=1}^n \nabla f_i(\bbx^k)$, the sequence of the iterates generated by SGD converges to the optimal argument in expectation. However, the convergence rate is sublinear and slower than the linear convergence of GD. In particular, the expected error $\|\bbx^{k}-\bbx^*\|^2$ of SGD is bounded above as
$\E{\|\bbx^{k}-\bbx^*\|^2}  \leq \mathcal{O}\left({1}/{k}\right),$
for a diminishing stepsizes $\eps^k$ of the order $1/k$. It is worth mentioning  that the expectation is taken with respect to the indices of the chosen random functions up to step $k$. 

One may use a cyclic order instead of stochastic selection of functions in SGD which leads to the update of incremental gradient descent method (IGD) as in \cite{blatt2007convergent,tseng2014incrementally}. Similar to the case for SGD, the sequence of iterates generated by the IGD method converges to the optimal argument at a sublinear rate of the order $\mathcal{O}\left({1}/{k}\right)$ when the stepsize is diminishing. SGD and IGD are able to reduce the computational complexity of GD by requiring only one gradient evaluation per iteration; however, they both suffer from slow (sublinear) convergence rates. 

The sublinear convergence rate of SGD has been improved recently by the stochastic average gradient method (SAG) which also can be interpreted as a stochastic incremental \textit{aggregated} gradient method. The SAG method updates only one gradient per iteration and uses the average of the most recent version of all gradients --  gradients of all functions $f_1$, \dots, $f_n$ -- as an approximation for the full gradient \cite{roux2012stochastic}. To be more specific, define $\bby_{i}^k$ as the copy of the decision variable $\bbx$ for the last time that the function $f_i$'s gradient is updated. {In other words, the variable $\bby_i^k$ is updated as 
\begin{equation}\label{def_y_new_review}
\bby_i^{k+1}=
\begin{cases}
\bbx^{k+1} \qquad  \text{if} \ i=i^k,\\
\bby_i^{k} \qquad\quad   \text{otherwise},
\end{cases}
\end{equation}
where $i^k$ is the index of the function chosen at step $k$.} Note that in the SAG method the random index $i^k$ is chosen uniformly at random and the gradient of its corresponding function $\nabla f_{i^k} (\bbx^k)$ is evaluated and stored as $\nabla f_{i^k} (\bby_i^k)$. Then, the update of SAG at step $k$ is given by 
\begin{equation}\label{SAG_update}
\bbx^{k+1}= \bbx^k - \frac{\eps}{n} \sum_{i=1}^n \nabla f_i (\bby_i^k),
\end{equation}
%
which uses the gradients of all the $n$ functions evaluated at different time steps. 
{The sequence of iterates generated by SAG converges linearly to $\bbx^*$ in expectation with respect to the choices of random indices, i.e., 
\begin{equation}\label{SAGA_lin_convg}
\E{\|\bbx^k-\bbx^*\|^2}\leq \left(1-\min\left\{\frac{1}{16\kappa},\frac{1}{8n}\right\}\right)^k C_0,
\end{equation}
where $C_0$ is a constant independent of $n$ and $\kappa$ \cite{schmidt2017minimizing}. However, the linear convergence constant of SAG in \eqref{SAGA_lin_convg} is not necessarily better than the linear convergence constant of GD for a problem with comparable condition number. To be more precise, the residual $\|\bbx^k-\bbx^*\|$ of SAG in expectation decays by the factor of $(1-\min\left\{\frac{1}{16\kappa},\frac{1}{8n}\right\})^{n/2}$ after a pass over the dataset which might not be better than the upper bound for the residual of GD that decays with the factor of $(\kappa-1)/(\kappa+1)$. As an example, for the problem that $n=100$ and $\kappa=10$, the worst case performance of GD after $m$ passes over the set of functions ($m$ iterations) is bounded above by $((\kappa-1)/(\kappa+1))^m \|\bbx^0-\bbx^*\| \approx 0.8181^m \|\bbx^0-\bbx^*\|$, while the worst performance of SAG after $m$ passes over the set of functions ($mn$ iterations) is bounded above by $C_0^{1/2} (1-\min\left\{\frac{1}{16\kappa},\frac{1}{8n}\right\})^{nm/2} \approx 0.9393^m C_0^{1/2}$. Note that the constants $\|\bbx^0-\bbx^*\|$ and $C_0$ are negligible for sufficiently large $m$.} 
Similar examples can be derived for other first-order stochastic methods with linear convergence rate in \cite{defazio2014saga, defazio2014finito,johnson2013accelerating,xiao2014proximal,shalev2013stochastic, shalev2016accelerated,zhang2013linear, konevcny2013semi,mairal2015incremental}. Beside this issue, the results for all these stochastic first-order methods hold in \textit{expectation}. Thus, there is a positive probability that the sequence of iterates generated by these methods might not converge at a linear rate.

The other alternative for solving the optimization problem in \eqref{org_prob} is the Incremental Aggregated Gradient (IAG) method which is a middle ground between GD and IGD. The IAG method requires one gradient evaluation per iteration, as in IG, while it approximates the gradient of the global objective function $\nabla f(\bbx)$ by the average of the most recent gradient of all instantaneous functions \cite{blatt2007convergent}, and it has a linear convergence rate, as in GD. In the IAG method, the functions are chosen in a cyclic order and it takes $n$ iterations to have a pass over all the available functions. To introduce the update of IAG, recall the definition of $\bby_{i}^k$ as the copy of the decision variable $\bbx$ for the last time that the function $f_i$'s gradient is updated before step $k$ which can be updated as in \eqref{def_y_new_review}. Then, the update of IAG is given by 
\begin{equation}\label{IAG_update}
\bbx^{k+1}= \bbx^k - \frac{\eps}{n} \sum_{i=1}^n \nabla f_i (\bby_i^k),
\end{equation}
which is identical to the update of SAG in \eqref{SAG_update}, and the only difference is in the scheme that the index $i^k$ is chosen.

The convergence results in \cite{tseng2014incrementally} provide global
convergence and local linear convergence of IAG in a more general setting when each component function satisfies a local Lipschitzian error condition. More recently, a new convergence analysis of IAG has been studied in \cite{gurbuzbalaban2015convergence} which shows global linear convergence of IAG for strongly convex functions with Lipschitz continuous gradients. In particular, it has been shown that the sequence of iterates $\bbx^k$ generated by IAG satisfies the following inequality
\begin{equation}\label{IAG_rate}
\|\bbx^{k}-\bbx^*\| \leq  \left(1-\frac{2}{25n(2n+1)(\kappa+1)^2}\right)^k  \|\bbx^{0}-\bbx^*\|.
\end{equation}
Notice that the convergence rate of IAG is linear and eventually the error of IAG will be smaller than the errors of SGD and IGD which diminish with a sublinear rate of $\mathcal{O}(1/k)$. To compare the performance of GD and IAG it is fair to compare one iteration of GD with $n$ iterations of IAG. This is reasonable since one iteration of GD requires $n$ gradient evaluations, while IAG uses $n$ gradient evaluations after $n$ iterations.  Comparing the decrement factors of GD in \eqref{GD_rate} and IAG after $n$ gradient evaluations in \eqref{IAG_rate} shows that there is no guarantee that IAG is preferable to GD for all choices of condition number $\kappa$ and number of functions $n$, since we could face the scenario that 
\begin{equation}\label{GD_IAG_rate_comp}
 \left( \frac{\kappa-1}{\kappa+1}\right)< \left(1-\frac{2}{25n(2n+1)(\kappa+1)^2}\right)^n.
\end{equation}
As an example, for the problem with $n=\kappa=100$, the inequality in \eqref{GD_IAG_rate_comp} holds and the worst case performance of IAG is worse than the one for GD. Note that the bound for GD in \eqref{GD_rate} is strict and we can design a sequence which satisfies the equality case of the result in \eqref{GD_rate}{\footnote{Consider the quadratic programming $f(\bbx)=(1/2)\bbx^T\bbA\bbx$, where $\bbA=\diag[\mu,L]$ which has the optimal argument $\bbx^*=\bb0\in \reals^2$. Then, by setting $\eps=2/(\mu+L)$ the sequence of iterates generated by GD satisfies the relation $\|\bbx^{m}-\bbx^*\|= \rho^m\|\bbx^{0}-\bbx^*\|$.}}. However, the bound in \eqref{IAG_rate} is not necessarily tight and it could be the reason that the comparison in \eqref{GD_IAG_rate_comp} does not justify the use of IAG instead of GD. Our goal in this paper is to come up with a first-order incremental method that has a guaranteed upper bound which is better than the one for GD in \eqref{GD_rate}. We propose this algorithm in the following section.

\section{Algorithm Definition}\label{sec_algorithm}

In this section, we propose a novel incremental gradient method that unlike other incremental methods is able to improve upon the worst case performance of GD. To do so, we first introduce a new interpretation of the IAG method. Recall the definition of the variable $\bby_i^k$ as the copy of the decision variable $\bbx$ for the last time that function $f_i$ is chosen for gradient update and its update scheme in \eqref{def_y_new_review}. The update of IAG in \eqref{IAG_update} can be interpreted as the solution of the optimization program 
\begin{equation}
\bbx^{k+1}=\argmin_{\bbx\in \reals^p} \left\{ 
 \frac{1}{n}\sum_{i=1}^n f_i (\bby_i^k) + \frac{1}{n}\sum_{i=1}^n\nabla f_i (\bby_i^k)^T (\bbx-\bby_i^k) 
	+ \frac{1}{n}\sum_{i=1}^n\frac{1}{2\eps}  \|\bbx-\bbx^k\|^2
\right\}.
\end{equation}
This interpretation shows that in the update of IAG each instantaneous function $f_{i}(\bbx)$ is approximated by the following approximation 
\begin{equation}\label{IAG_approx}
f_i (\bbx) \approx f_i (\bby_i^k) +\nabla f_i (\bby_i^k)^T (\bbx-\bby_i^k) 
	+\frac{1}{2\eps}  \|\bbx-\bbx^k\|^2 .
\end{equation}
Notice that the first two terms $f_i (\bby_i^k) +\nabla f_i (\bby_i^k)^T (\bbx-\bby_i^k) $ correspond to the first order approximation of the function $f_i$ around the iterate $\bby_i^k$. The last term which is ${1}{/(2\eps)}  \|\bbx-\bbx^k\|^2$ is a proximal term that is added to the first order approximation. This approximation is different from the classic approximation that is used in first-order methods, since the first-order approximation is evaluated around the point $\bby_{i}^k$ which is different from the iterate $\bbx^k$ used in the proximal term. This observation verifies that the IAG algorithm performs well when the delayed variables $\bby_i^k$ are close to the current iterate $\bbx^k$ which is true when the stepsize $\eps$ is very small or the iterates are all close to the optimal solution. 

We resolve this issue by introducing a different approach for approximating each component function $f_i$, In particular, we use the approximation
\begin{equation}\label{IAG_2_approx}
f_i (\bbx) \approx f_i (\bby_i^k) +\nabla f_i (\bby_i^k)^T (\bbx-\bby_i^k) 
	+\frac{1}{2\eps}  \|\bbx-\bby_i^k\|^2.
\end{equation}
As we observe, the approximation in \eqref{IAG_2_approx} is more consistent to classic first-order methods comparing to the one for IAG in \eqref{IAG_approx}. This is true since the first order approximation and the proximal term in \eqref{IAG_2_approx} are evaluated with respect to the same point $\bby_i^k$. Indeed, the approximation in \eqref{IAG_2_approx} implies that the global objective function $f(\bbx)$ can be approximated by
\begin{equation}\label{IAG_2_global_approx}
f(\bbx) \approx \frac{1}{n}\sum_{i=1}^n f_i (\bby_i^k) +\frac{1}{n} \sum_{i=1}^n\nabla f_i (\bby_i^k)^T (\bbx-\bby_i^k) 
	+\frac{1}{n} \sum_{i=1}^n\frac{1}{2\eps}  \|\bbx-\bby_i^k\|^2.
\end{equation}
We can approximate the optimal argument of the global objective function $f$ by minimizing its approximation in \eqref{IAG_2_global_approx}. Thus, the updated iterate $\bbx^{k+1}$ can be computed as the minimizer of the approximated global objective function in \eqref{IAG_2_global_approx}, i.e., 
\begin{equation}\label{IAG_2_update}
\bbx^{k+1}=\argmin_{\bbx\in \reals^p}\left\{ \frac{1}{n}\sum_{i=1}^n f_i (\bby_i^k) + \frac{1}{n}\sum_{i=1}^n\nabla f_i (\bby_i^k)^T (\bbx-\bby_i^k) 
	+ \frac{1}{n}\sum_{i=1}^n\frac{1}{2\eps}  \|\bbx-\bby_i^k\|^2 \right\}.
\end{equation}
Considering the convex programming in \eqref{IAG_2_update} we can derive a closed form expression for the update of $\bbx^{k+1}$ as
\begin{equation}\label{incremental_qn_update}
\bbx^{k+1}=
			 \frac{1}{n} \sum_{i=1}^n \bby_i^k - \frac{\eps}{n} \sum_{i=1}^n \nabla f_i(\bby_i^k) .
\end{equation}
We refer to the proposed method with the update in \eqref{incremental_qn_update} as the Double Incremental Aggregated Gradient method (DIAG). This appellation is justified considering that the update of DIAG requires the \textit{incremented aggregate} of \textit{both} variables and gradients and only uses \textit{gradient} (first-order) information. 

Notice that since we use a cyclic scheme, the set of variables $\{\bby_1^k,\bby_2^k,\dots,\bby_n^k\}$ is equal to the set $\{\bbx^k,\bbx^{k-1},\dots,\bbx^{k-n+1}\}$. 
Therefore, the iterate $\bbx^{k+1}$ is a function of the last $n$ iterates $\{\bbx^k,\bbx^{k-1},\dots,\bbx^{k-n+1}\}$. This observation has a fundamental role in the analysis of the proposed DIAG method -- see Section 4. 

\begin{remark}\label{remark:MISO}
One may consider the proposed DIAG method as a cyclic version of the stochastic methods Finito and MISO algorithms introduced in \cite{defazio2014finito} and\cite{mairal2015incremental}, respectively. This is a valid interpretation; however, the convergence analyses and guarantees of these methods are quite different. The proposed DIAG  method is designed based on the new interpretation in \eqref{IAG_2_approx} that leads to a novel proof technique -- see Lemma \ref{fundamental_lemma} -- which is different from the analysis of Finito/MISO in \cite{defazio2014finito} and \cite{mairal2015incremental}. This analytical difference leads to different convergence guarantees. In particular, the Finito/MISO algorithm cannot  improve the performance of GD for all choices of $n$ and $\kappa$, while the established theoretical results for DIAG in Section  \ref{sec_Convergence} guarantee that DIAG outperforms GD under any choices of $n$ and $\kappa$.
\end{remark}


\subsection{Implementation Details}\label{impl_sec}
Naive implementation of the update in \eqref{incremental_qn_update} requires computation of sums of $n$ vectors per iteration which is computationally costly. This unnecessary computation can be avoided by tracking the sums over time. {To be more precise, we can define $\bbv^k$ as the vector that tracks the first sum in \eqref{incremental_qn_update} which is the sum of the variables. The vector $\bbv^k$ can be updated as 
\begin{equation}\label{sum_var_update}
\bbv^{k+1}=\bbx^{k+1}-\bby_{i^k}^k + \bbv^k  ,
\end{equation}
where $i^k$ is the index of the function chosen at step $k$. Likewise, we define the vector $\bbg^k$ as the vector that tracks the sum of gradients in \eqref{incremental_qn_update}, and it can be updated as
\begin{equation}\label{sum_grad_update}
\bbg^{k+1}=\nabla f_{i^k} (\bbx^{k+1})- \nabla f_{i^k} (\bby_{i^k}^k)+\bbg^{k}.
\end{equation}
Note that the vectors $\bbv^k$ and $\bbg^k$ are initialized as $\bbv^0=n\bbx^0$ and $\bbg^0=\sum_{i=1}^n\nabla f_i(\bbx^0)$.}

%
\begin{algorithm}[t]{
\caption{Double Incremental Aggregated Gradient method (DIAG)}
\label{algo_lbfgs} 
\begin{algorithmic}[1]
   \STATE \textbf{Initialization}: $\{\bby_i^0\}_{i=1}^{i=n}=\bbx^0$, $\bbv^0=n\bbx^0$, and $\bbg^0=\sum_{i=1}^n\nabla f_i(\bbx^0)$
   \FOR   {$k= 0, 1, \ldots$ } 
   \STATE  Compute the function index $i^k=$ mod$(k,n)+1$ 
   \vspace{1mm}
      \STATE Compute  
             ${\bbx^{k+1}=
			 \frac{1}{n} \bbv^k - \frac{\eps}{n} \bbg^k .}$
   \vspace{1mm}
	\STATE Update sum of variables ${ \bbv^{k+1}=  \bbx^{k+1}-\bby_{i^k}^k}+\bbv^k	$.	 
	  \vspace{1mm}
      \STATE Compute $\nabla f_{i^k}(\bbx^{k+1})$ and update ${\bbg^{k+1}= \nabla f_{i^k} (\bbx^{k+1})- \nabla f_{i^k} (\bby_{i^k}^k)+\bbg^k .}$
        \vspace{1mm}
      \STATE Replace $\bby_{i^k}^k$ and $\nabla f_{i^k}( \bby_{i^k}^k)$ by $\bbx^{k+1}$ and $\nabla f_{i^k}(\bbx^{k+1})$, respectively. The other elements remain unchanged, i.e., $\bby_{i}^{k+1}\!=\!\bby_{i}^k$ and $\nabla f_i( \bby_{i}^{k+1})\!=\!\nabla f_i( \bby_{i}^k)$ for $i\!\neq\! {i^k}$.
   \ENDFOR
\end{algorithmic}}\end{algorithm}

The proposed double incremental aggregated gradient (DIAG) method is summarized in Algorithm 1. The variables for all the copies of the vector $\bbx$ are initialized by vector $\bbx^0$, i.e., $\bby_1^0=\dots=\bby_n^0=\bbx^0$, and their corresponding gradients are stored in the memory. At each iteration $k$, the updated variable $\bbx^{k+1}$ is computed in Step 4 using the update in \eqref{incremental_qn_update}. The sums of variables and gradients are updated in Step 5 and 6, respectively, following the recursions in \eqref{sum_var_update} and \eqref{sum_grad_update}. In Step 7, the old variable $\bby_{i^k}^k$ and gradient $\nabla f_{i^k}( \bby_{i^k}^k)$ of the updated function $f_{i^k}$ are replaced with their updated versions, i.e., $\bbx^{k+1}$ and $\nabla f_{i^k}(\bbx^{k+1})$, and the other components remain unchanged. In Step 3, the index $i^k$ is updated in a cycling manner. 

{
\begin{remark}\label{remark:memory}
Similar to other known incremental methods, e.g., IAG, SAG, SAGA, Finito/MISO, the proposed DIAG method requires a memory of order $\mathcal{O}(np)$ which might not be affordable in some large-scale optimization problems. This issue can be resolved by grouping the functions and creating new sets of functions where each one is the average of a subset of functions. If we combine $m$ functions and use the average of them as the new function, the number of active functions reduces to $n/m$ and the required memory decreases to $\mathcal{O}(np/m)$. On the other hand, this process increases the computational complexity of each iteration from one gradient computation to calculation of $m$ gradients. Indeed, there is a trade-off between the memory and computational complexity per iteration which can be optimized based on the application of interest.
\end{remark}
}

\section{Convergence Analysis}\label{sec_Convergence}

In this section, we study the convergence properties of the proposed double incremental aggregated gradient method. 

The following lemma characterizes an upper bound for the error $\|\bbx^{k+1}-\bbx^*\|$ in terms of the errors of the last $n$ iterations.

\begin{lemma}\label{fundamental_lemma}
Consider the proposed double incremental aggregated gradient (DIAG) method in \eqref{incremental_qn_update}. 
If the conditions in Assumption \ref{cnvx_lip} hold, and the stepsize $\eps$ is chosen as $\eps=2/(\mu+L)$, the sequence of iterates $\bbx^k$ generated by DIAG satisfies the inequality
\begin{equation} \label{fundamental_lemma_claim}
\|\bbx^{k+1}-\bbx^*\|\leq  \left( \frac{\kappa-1}{\kappa+1}\right) \left[ \frac{\| \bbx^k-\bbx^*\|+\dots +\| \bbx^{k-n+1}-\bbx^*\|}{n}\right],
\end{equation}
where $\kappa=L/\mu$ is the objective function condition number.
\end{lemma}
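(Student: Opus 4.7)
The plan is to rewrite the DIAG recursion as an average of $n$ individual gradient-descent-style updates, each applied to a distinct instantaneous function $f_i$, and then exploit the classical contraction property of the gradient step for a single strongly convex, smooth function with stepsize $\eps=2/(\mu+L)$. Because the iterate produced by DIAG is linear in $(\bby_i^k, \nabla f_i(\bby_i^k))$, the triangle inequality will cleanly convert a pointwise bound on each summand into the desired average-of-errors bound on $\|\bbx^{k+1}-\bbx^*\|$.

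First I would use the optimality condition $\sum_{i=1}^n \nabla f_i(\bbx^*)=\bb0$ together with the trivial identity $\bbx^*=(1/n)\sum_{i=1}^n \bbx^*$ to rewrite \eqref{incremental_qn_update} as
\begin{equation*}
\bbx^{k+1}-\bbx^*
  \;=\; \frac{1}{n}\sum_{i=1}^n
         \Bigl[(\bby_i^k-\bbx^*) \;-\; \eps\bigl(\nabla f_i(\bby_i^k)-\nabla f_i(\bbx^*)\bigr)\Bigr].
\end{equation*}
Applying the triangle inequality yields
\begin{equation*}
\|\bbx^{k+1}-\bbx^*\|
  \;\leq\; \frac{1}{n}\sum_{i=1}^n
   \bigl\|(\bby_i^k-\bbx^*) - \eps\bigl(\nabla f_i(\bby_i^k)-\nabla f_i(\bbx^*)\bigr)\bigr\|.
\end{equation*}

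Next I would invoke the standard contraction estimate for a single function satisfying Assumption \ref{cnvx_lip}: for any $\bby$ and any reference point $\bbz$, the map $\bby\mapsto \bby-\eps\bigl(\nabla f_i(\bby)-\nabla f_i(\bbz)\bigr)$ with $\eps=2/(\mu+L)$ is a contraction with factor $(\kappa-1)/(\kappa+1)$. This is the same bound that underlies the classical GD rate \eqref{GD_rate} and is a direct consequence of co-coercivity: expanding the squared norm and using
\begin{equation*}
(\nabla f_i(\bby)-\nabla f_i(\bbz))^T(\bby-\bbz)
 \;\geq\; \tfrac{\mu L}{\mu+L}\|\bby-\bbz\|^2
        + \tfrac{1}{\mu+L}\|\nabla f_i(\bby)-\nabla f_i(\bbz)\|^2
\end{equation*}
gives the stated factor after simplification. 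Applying this with $\bbz=\bbx^*$ to every summand produces
\begin{equation*}
\|\bbx^{k+1}-\bbx^*\|
  \;\leq\; \frac{\kappa-1}{\kappa+1}\cdot\frac{1}{n}\sum_{i=1}^n \|\bby_i^k-\bbx^*\|.
\end{equation*}

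Finally, I would close the argument by invoking the cyclic-order observation highlighted in the algorithm section, namely that after the first pass the multiset $\{\bby_1^k,\dots,\bby_n^k\}$ coincides with $\{\bbx^k,\bbx^{k-1},\dots,\bbx^{k-n+1}\}$. Substituting this identification into the displayed inequality yields \eqref{fundamental_lemma_claim}. The main obstacle I anticipate is largely bookkeeping: verifying the co-coercivity-based single-step contraction (which, although standard, is the only place where both strong convexity and Lipschitz continuity of the \emph{individual} $f_i$'s enter), and being careful that for iteration indices $k<n-1$ the set $\{\bby_i^k\}$ still equals $\{\bbx^k,\bbx^{k-1},\ldots,\bbx^0,\ldots\}$ under the standard initialization $\bby_i^0=\bbx^0$, so that the stated bound remains valid throughout the trajectory.
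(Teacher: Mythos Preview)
Your proposal is correct and follows the same overall structure as the paper's proof: rewrite $\bbx^{k+1}-\bbx^*$ as an average using $\sum_i\nabla f_i(\bbx^*)=\bb0$, apply the triangle inequality, bound each summand by the single-step contraction factor $(\kappa-1)/(\kappa+1)$, and then invoke the cyclic identification $\{\bby_i^k\}=\{\bbx^k,\dots,\bbx^{k-n+1}\}$.

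The only substantive difference is how the per-term contraction is established. The paper writes $\nabla f_i(\bby_i^k)-\nabla f_i(\bbx^*)=\nabla^2 f_i(\bbu_i^k)(\bby_i^k-\bbx^*)$ via a mean-value argument and then bounds $\|\bbI-\eps\nabla^2 f_i(\bbu_i^k)\|\le\max\{|1-\eps\mu|,|1-\eps L|\}$; this tacitly assumes twice differentiability of each $f_i$, which is not part of Assumption~\ref{cnvx_lip}. Your route through the co-coercivity inequality
\[
(\nabla f_i(\bby)-\nabla f_i(\bbz))^T(\bby-\bbz)\ge \tfrac{\mu L}{\mu+L}\|\bby-\bbz\|^2+\tfrac{1}{\mu+L}\|\nabla f_i(\bby)-\nabla f_i(\bbz)\|^2
\]
yields the same factor while using only the hypotheses actually stated, so it is in fact the cleaner justification here. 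Otherwise the two arguments are identical.
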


\begin{proof}
See Appendix \ref{apx_fundamental_lemma}.
\end{proof}

The result in Lemma \ref{fundamental_lemma} has a significant role in the analysis of DIAG. It shows that the error at step $k+1$ is smaller than the average of the last $n$ errors where the decrement factor is the ratio $(\kappa-1)/(\kappa+1)$ which is strictly smaller than $1$. The cyclic scheme is critical in proving the result in \eqref{fundamental_lemma_claim}, since it allows to replace the sum $\sum_{i=1}^n {\| \bby_i^k-\bbx^*\|}$ by the sum of the last $n$ steps errors $\| \bbx^k-\bbx^*\|+\dots +\| \bbx^{k-n}-\bbx^*\|$. Note that If we pick functions uniformly at random, as in MISO, it is not possible to write the expression in \eqref{fundamental_lemma_claim}, even in expectation. We also cannot write an inequality similar to the one in \eqref{fundamental_lemma_claim} for the IAG method, although it uses a cyclic scheme. This contrast is originated by the difference that IAG only uses gradients average, whereas DIAG uses both variables and gradients averages. In the following theorem, we use the result in Lemma \ref{fundamental_lemma} to show that the sequence of variables $\bbx^k$ converges to the optimal argument $\bbx^*$.

\begin{proposition}\label{CIAG_better_than_GD_lemma}
Consider the proposed double incremental aggregated gradient (DIAG) method in \eqref{incremental_qn_update}, and recall the definition $\rho:=(\kappa-1)/(\kappa+1)$ where $\kappa=L/\mu$ is the problem condition number. If the conditions in Assumption \ref{cnvx_lip} hold, and the stepsize $\eps$ is chosen as $\eps=2/(\mu+L)$, then the residual $\|\bbx^{k}-\bbx^*\|$ of DIAG for iterations $k=1,\dots,n$ satisfies the inequality
\begin{align}\label{eq_lin_result_1}
\|\bbx^{k}-\bbx^*\| \leq  \rho \left[1-\frac{(k-1)(1-\rho)}{n}\right]\|\bbx^{0}-\bbx^*\| ,\quad \for \ k=1,\dots,n,
\end{align}
and for the steps $k>n$ we have
\begin{align}\label{eq_lin_result_2}
\|\bbx^{k}-\bbx^*\| \leq  \rho^{\lfloor \frac{k-1}{n}\rfloor +1} \left[1-\frac{(1-\rho)}{n} \times \min\left\{1,\frac{n-1}{2}\right\}\right]\|\bbx^{0}-\bbx^*\| ,\quad \for \ k>n,
\end{align}
%
%
%
where $\lfloor a \rfloor$ indicates the floor of $a$. \end{proposition}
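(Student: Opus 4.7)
The plan is to apply the fundamental inequality from Lemma~\ref{fundamental_lemma} and prove the two bounds by induction on the step $k$, treating the first pass $k\le n$ and the subsequent passes $k>n$ separately. Throughout, write $e_k:=\|\bbx^k-\bbx^*\|$. The initialization $\bby_i^0=\bbx^0$ for all $i$ together with the cyclic rule $i^k=\mathrm{mod}(k,n)+1$ implies that for $k\le n$ the multiset $\{\bby_1^k,\ldots,\bby_n^k\}$ consists of $\bbx^1,\ldots,\bbx^k$ together with $n-k$ copies of $\bbx^0$, so Lemma~\ref{fundamental_lemma} specializes to
\begin{equation*}
e_{k+1}\le\frac{\rho}{n}\Bigl[\sum_{j=1}^k e_j+(n-k)\,e_0\Bigr],\qquad 0\le k\le n-1,
\end{equation*}
and for $k\ge n$ it reads $e_{k+1}\le(\rho/n)\sum_{j=k-n+1}^k e_j$.

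For Part~1 I would induct on $k\in\{1,\ldots,n\}$. The base case $k=1$ is immediate from the specialized inequality with $k=0$, giving $e_1\le\rho e_0$. For the inductive step, substitute the bound $e_j\le\rho[1-(j-1)(1-\rho)/n]e_0$ into the specialized inequality, use $\sum_{j=1}^k(j-1)=k(k-1)/2$, and collect terms. The coefficient of $\rho e_0$ simplifies to $1-k(1-\rho)/n-\rho(1-\rho)k(k-1)/(2n^2)$; the last summand is nonpositive, and dropping it yields \eqref{eq_lin_result_1} at index $k+1$.

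For Part~2 let $U_j$ denote the right-hand side of whichever of \eqref{eq_lin_result_1}, \eqref{eq_lin_result_2} applies at index $j$. The base case $k=n+1$ requires the exact sum of Part~1 bounds:
\begin{equation*}
\sum_{j=1}^n \rho\Bigl[1-\tfrac{(j-1)(1-\rho)}{n}\Bigr]e_0 = \rho n\Bigl[1-\tfrac{(n-1)(1-\rho)}{2n}\Bigr]e_0,
\end{equation*}
which combined with Lemma~\ref{fundamental_lemma} at $k=n$ yields $e_{n+1}\le\rho^2[1-(n-1)(1-\rho)/(2n)]e_0$. Since $\min\{1,(n-1)/2\}\le(n-1)/2$ for every $n\ge 1$, this bound is at most $U_{n+1}$. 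For the inductive step with $k+1>n+1$ (so $k\ge n+1$), two monotonicity facts suffice: (i) the sequence $U_j$ is nonincreasing in $j$; and (ii) $\rho\,U_{k-n+1}\le U_{k+1}$, which follows because for $k\ge 2n$ the exponent of $\rho$ in $U_j$ increases by exactly one as $j$ moves from $k-n+1$ to $k+1$, whereas for $n+1\le k\le 2n-1$ the bound $U_{k-n+1}=\rho[1-(k-n)(1-\rho)/n]e_0$ satisfies $k-n\ge 1\ge\min\{1,(n-1)/2\}$. Combining these with the inductive hypothesis gives
\begin{equation*}
e_{k+1}\le\frac{\rho}{n}\sum_{j=k-n+1}^{k} U_j\le\rho\,U_{k-n+1}\le U_{k+1},
\end{equation*}
which closes the induction.

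The main obstacle is the transition at $k=n+1$: the naive monotone bound $\rho\,U_1$ collapses there to $\rho^2 e_0$, which is strictly weaker than \eqref{eq_lin_result_2}. Recovering the $[1-\ldots]$ correction factor forces one to sum \emph{all} the Part~1 bounds exactly, which produces the constant $(n-1)/(2n)$ in the intermediate estimate; the elementary inequality $\min\{1,(n-1)/2\}\le(n-1)/2$ is then what links this intermediate estimate to the clean expression in \eqref{eq_lin_result_2} and to the uniform exponent $\lfloor(k-1)/n\rfloor+1$ used in the subsequent induction.
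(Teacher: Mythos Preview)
Your proposal is correct and follows essentially the same route as the paper's proof: both parts rest on Lemma~\ref{fundamental_lemma}, Part~1 is handled by induction with the same algebraic simplification (drop the nonpositive $-\rho(1-\rho)k(k-1)/(2n^2)$ term), and the base case $k=n+1$ of Part~2 is obtained by summing the Part~1 bounds exactly to produce the factor $(n-1)/(2n)$, which is then relaxed via $\min\{1,(n-1)/2\}\le (n-1)/2$.

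The only organizational difference is in the inductive step of Part~2. The paper proceeds \emph{pass by pass}: it first treats $k=n+2,\dots,2n$ by weakening all of $e_2,\dots,e_{n+1}$ to the common bound $\rho[1-(1-\rho)/n\cdot\min\{1,(n-1)/2\}]e_0$ and then, for $j\ge 2$, assumes the claim on the block $nj+1,\dots,nj+n$ and proves it on the next block. You instead run a single step-by-step induction using the two monotonicity facts (i) $U_j$ nonincreasing and (ii) $\rho\,U_{k-n+1}\le U_{k+1}$. Your packaging is a bit cleaner and avoids the repeated weakening steps in the paper; on the other hand, you state (i) without justification, and the only nontrivial check there is the transition $U_n\ge U_{n+1}$, which reduces to $1\ge\rho^2$ for $n\ge 3$ and to $(2+\rho)(1-\rho)\ge 0$ for $n=2$. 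Once that is filled in, the two arguments are equivalent in content.
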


\begin{proof}
See Appendix \ref{apx_CIAG_better_than_GD_lemma}.
\end{proof}

The first outcome of the result in Proposition \ref{CIAG_better_than_GD_lemma} is the convergence of the sequence $\|\bbx^{k}-\bbx^*\|$ to zero as $k$ approaches infinity. The second result which we formalize in the following corollary shows that the sequence of error converges linearly after each pass over the dataset.

\begin{corollary}\label{imp_cor}
If the conditions in Proposition \ref{CIAG_better_than_GD_lemma} are satisfied, the error of the proposed DIAG method after $m>1$ passes over the set of functions $f_i$, which requires $mn$ gradient evaluations and corresponds to the iterate $k=n(m-1)+1$, is bounded above by 
\begin{equation} \label{pass_IAG}
\|\bbx^{n(m-1)+1}-\bbx^*\|  \leq \rho^{m} 
 \left[1-\frac{(1-\rho)}{n} \times \min\left\{1,\frac{n-1}{2}\right\}\right]\|\bbx^{0}-\bbx^*\|.
\end{equation}
\end{corollary}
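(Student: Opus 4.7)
The plan is to derive this as a direct substitution into the second inequality from Proposition~\ref{CIAG_better_than_GD_lemma}. Since we are told $m>1$, the target index $k = n(m-1)+1$ satisfies $k \geq n+1 > n$, which is exactly the regime where the bound \eqref{eq_lin_result_2} applies.

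First, I would verify the exponent on $\rho$ in \eqref{eq_lin_result_2}. For $k = n(m-1)+1$ we have
\begin{equation*}
\left\lfloor \frac{k-1}{n} \right\rfloor + 1 = \left\lfloor \frac{n(m-1)}{n} \right\rfloor + 1 = (m-1) + 1 = m,
\end{equation*}
so the power of $\rho$ in the bound becomes $\rho^m$, matching the claimed expression. Next, the bracketed factor $\bigl[1 - (1-\rho)/n \cdot \min\{1,(n-1)/2\}\bigr]$ appearing in \eqref{eq_lin_result_2} depends only on $n$ and $\rho$, not on $k$, so it carries over unchanged.

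Combining these two observations, direct substitution of $k = n(m-1)+1$ into \eqref{eq_lin_result_2} yields the claimed inequality \eqref{pass_IAG}. There is no genuine obstacle here; the corollary is simply reading off the convergence guarantee of Proposition~\ref{CIAG_better_than_GD_lemma} at the specific iterate corresponding to the end of the $m$-th pass through the data. The only thing worth emphasizing in the write-up is the interpretation: $mn$ gradient evaluations of DIAG (one per iteration) correspond to iteration $k = n(m-1)+1$ once we account for the fact that the first pass produces the iterates $\bbx^1,\dots,\bbx^n$ and the $m$-th pass concludes at iterate $\bbx^{nm+1}$ or, equivalently, begins with $\bbx^{n(m-1)+1}$, which is the indexing convention adopted in the statement.
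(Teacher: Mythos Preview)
Your proposal is correct and follows the same approach as the paper: both substitute $k=n(m-1)+1$ into \eqref{eq_lin_result_2} and read off the result. Your explicit computation of $\lfloor (k-1)/n\rfloor+1=m$ is a useful detail the paper omits; however, your final paragraph on the gradient-count bookkeeping is slightly muddled---the paper's justification is cleaner: the initialization $\bbg^0=\sum_{i=1}^n\nabla f_i(\bbx^0)$ already costs $n$ gradient evaluations, so computing $\bbx^k$ uses $n+k-1$ gradients in total, and setting $n+k-1=mn$ gives $k=n(m-1)+1$.
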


\begin{proof}
Since we count the initial $n$ gradient computations, the iterate $\bbx^1$ requires $n$ gradient computations. After the first iteration each step requires only one gradient computation. Therefore, the total number of gradient computations to evaluate $\bbx^k$ is $n+k-1$. Conversely, the variable that has exactly used $mn$ gradients to be evaluated is $\bbx^{n(m-1)+1}$. Thus, by setting $k=n(m-1)+1$ in \eqref{eq_lin_result_2} we obtain the residual of DIAG after $mn$ passes over the set of functions and the claim in \eqref{pass_IAG} follows.
\end{proof}

The result in Corollary \ref{imp_cor} shows that the subsequence of the last iterates of each pass is linearly convergent. Moreover, the result in Corollary \ref{imp_cor} verifies the advantage of DIAG method versus the full gradient descent (GD) method. In particular, it shows that the error of DIAG after $m>1$ passes over the set of functions $f_i$ corresponding to the iterate $k=n(m-1)+1$ is bounded above by $\rho^{m} [1-({(1-\rho)}/{n}) \times \min\{1,({(n-1)}/{2})\}]\|\bbx^{0}-\bbx^*\|$ which is strictly smaller than the upper bound for the error of GD after $m$ iterations ($nm$ gradient computations)  given by $\rho^m\|\bbx^{0}-\bbx^*\|$. Therefore, the DIAG method outperforms GD for any choice of $\kappa$ and $n>1$; DIAG and GD are identical for $n=1$.

 Notice that after the first pass over the set of functions -- iteration $k=1$ for the DIAG method -- the error of DIAG is upper bounded by $\rho\|\bbx^{0}-\bbx^*\|$ based on the result in \eqref{eq_lin_result_1}. This bound is identical to the result for GD after one pass over the set of functions, since the first iterations of GD and DIAG are identical. 



Although the result in Corollary \ref{imp_cor} implies that the DIAG method is preferable with respect to GD and shows linear convergence of a subsequence of iterates, it is not sufficient to prove linear convergence of the whole sequence of iterates generated by DIAG. To be more precise, the result in Corollary \ref{imp_cor} shows that the subsequence of errors $\{\|\bbx^{kn}-\bbx^*\|\}_{k=0}^\infty  $, which are associated with the variables at the end of each pass over the set of functions, is linearly convergent. However, we aim to show that the whole sequence $\{\|\bbx^{k}-\bbx^*\|\}_{k=0}^\infty$ is linearly convergent. To be more precise, our goal is to prove that the sequence of DIAG iterates satisfies $\|\bbx^k-\bbx^*\|\leq a \gamma^k \|\bbx^0-\bbx^*\|$ for a constant $a> 0$ and a positive coefficient $0\leq\gamma<1$. In the following theorem, we show that this condition is satisfied for the DIAG method.

\begin{theorem}\label{thm_lin}
Consider the introduced double incremental aggregated gradient\\ (DIAG) method in \eqref{incremental_qn_update}. 
If the conditions in Assumption \ref{cnvx_lip} hold, and the stepsize $\eps$ is chosen as $\eps=2/(\mu+L)$, for $k\geq 1$ we can write
\begin{equation} \label{lin_IAG}
\|\bbx^{k}-\bbx^*\|  \leq a \gamma^k \|\bbx^{0}-\bbx^*\|,
\end{equation}
if the constants $a>0$ and $0\leq \gamma<1 $ satisfy the following conditions 
\begin{align}
&\rho \left(1-\frac{(k-1)(1-\rho)}{n}\right)\leq a \gamma^k\quad \for\quad  k=1,\dots, n, \label{cond1}\\
& \gamma^{n+1}-\left(1+\frac{\rho}{n}\right)\gamma^n +\frac{\rho}{n}\leq 0\quad \for \quad k>n.\label{cond2}
\end{align}
\end{theorem}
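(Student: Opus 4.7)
The plan is to prove the bound \eqref{lin_IAG} by strong induction on $k$, using Proposition \ref{CIAG_better_than_GD_lemma} to handle the first pass over the dataset and Lemma \ref{fundamental_lemma} combined with a geometric-sum argument for the subsequent iterations.

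For the base cases $k = 1, \dots, n$, I would simply invoke the estimate \eqref{eq_lin_result_1} from Proposition \ref{CIAG_better_than_GD_lemma}, which yields
\begin{equation*}
\|\bbx^k - \bbx^*\| \leq \rho\left(1 - \frac{(k-1)(1-\rho)}{n}\right)\|\bbx^0 - \bbx^*\|.
\end{equation*}
Condition \eqref{cond1} is precisely the statement that the right-hand side is at most $a\gamma^k \|\bbx^0 - \bbx^*\|$, so the base cases hold.

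For the inductive step, fix $k > n$ and assume the bound \eqref{lin_IAG} holds for all iterates $\bbx^{k-1}, \bbx^{k-2}, \dots, \bbx^{k-n}$ (whose indices are all at least $1$ since $k \geq n+1$). Applying Lemma \ref{fundamental_lemma} to index $k-1$ and then the inductive hypothesis termwise gives
\begin{equation*}
\|\bbx^k - \bbx^*\| \;\leq\; \frac{\rho}{n}\sum_{i=1}^{n} \|\bbx^{k-i} - \bbx^*\| \;\leq\; \frac{a\rho}{n}\|\bbx^0 - \bbx^*\| \sum_{i=1}^{n} \gamma^{k-i}.
\end{equation*}
The geometric sum evaluates to $\gamma^{k-n}(1-\gamma^n)/(1-\gamma)$, so it remains to show that
\begin{equation*}
\frac{\rho}{n}\cdot \gamma^{k-n}\cdot \frac{1-\gamma^n}{1-\gamma} \;\leq\; \gamma^k.
\end{equation*}
Multiplying through by $n(1-\gamma)\gamma^{n-k} > 0$ and rearranging, this inequality is equivalent to $n\gamma^{n+1} - (n+\rho)\gamma^n + \rho \leq 0$, which after dividing by $n$ is exactly condition \eqref{cond2}. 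This closes the induction.

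The main obstacle — and the reason to keep the proof in this form rather than trying a direct contraction-mapping argument — is the delayed, $n$-term recursion from Lemma \ref{fundamental_lemma}: a naive bound of the average by its largest summand loses the improvement over GD. The geometric-sum calculation above is what converts the average of $n$ decaying terms into a sharp requirement on the single rate $\gamma$, and condition \eqref{cond2} is exactly the characteristic inequality that must hold for the resulting $n$-step recursion to admit a solution of the form $a\gamma^k$. In the next theorem of the paper one expects to analyze this polynomial in $\gamma$ to extract the best (smallest) admissible rate.
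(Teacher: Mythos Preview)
Your proof is correct and follows essentially the same approach as the paper: base cases $k=1,\dots,n$ via Proposition~\ref{CIAG_better_than_GD_lemma} and condition~\eqref{cond1}, then strong induction for $k>n$ using Lemma~\ref{fundamental_lemma}, summing the resulting geometric series, and recognizing the needed inequality $\frac{\rho(1-\gamma^n)}{n(1-\gamma)}\leq \gamma^n$ as an algebraic rearrangement of condition~\eqref{cond2}. The only cosmetic difference is that the paper first writes the equivalence chain for~\eqref{cond2} separately before substituting, whereas you carry out the rearrangement inline.
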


\begin{proof}
See Appendix \ref{apx_linear_convg_lemma}.
\end{proof}

The result in Theorem \ref{thm_lin} provides conditions on the constants $a$ and $\gamma$ such that the linear convergence inequality $\|\bbx^{k}-\bbx^*\|  \leq a \gamma^k \|\bbx^{0}-\bbx^*\|$ holds. However, it does not gurantee that the set of constants $\{a,\gamma\}$ that satisfy the required conditions in \eqref{cond1} and \eqref{cond2} is non-empty. In the following proposition we show that there exist constants $a$ and $\gamma$ satisfying these conditions. 
%
\begin{proposition}\label{prop_non_empty_set}
There exist constants $a>0$ and $0<\gamma<1$ that satisfy the inequalities in \eqref{cond1} and \eqref{cond2}. In other words, the set of feasible solutions for the system of inequalities in \eqref{cond1} and \eqref{cond2} is non-empty.
\end{proposition}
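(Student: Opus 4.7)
The plan is to handle the two conditions separately: first find a $\gamma \in (0,1)$ satisfying the polynomial inequality \eqref{cond2}, and then choose $a$ large enough so that \eqref{cond1} holds automatically.

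Define the polynomial $p(\gamma) := \gamma^{n+1} - (1+\rho/n)\gamma^n + \rho/n$. The plan is to study its behavior at $\gamma = 1$. Direct computation gives $p(1) = 1 - (1+\rho/n) + \rho/n = 0$, so $\gamma = 1$ is a root, and differentiating yields $p'(\gamma) = (n+1)\gamma^n - (n+\rho)\gamma^{n-1}$, hence $p'(1) = (n+1)-(n+\rho) = 1-\rho$. Since $\kappa \geq 1$, we have $\rho = (\kappa-1)/(\kappa+1) \in [0,1)$, so $p'(1) = 1 - \rho > 0$ (the edge case $\kappa = 1$ gives $\rho = 0$ and the claim becomes trivial). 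Thus $p$ is strictly increasing at $\gamma = 1$ with $p(1)=0$, so by continuity there exists $\delta > 0$ with $p(\gamma) < 0$ on $(1-\delta, 1)$. Pick any such $\gamma$; then $\gamma \in (0,1)$ and condition \eqref{cond2} is satisfied with strict inequality.

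With $\gamma$ fixed, consider the finite collection of quantities
\begin{equation*}
c_k := \frac{\rho}{\gamma^k}\left(1 - \frac{(k-1)(1-\rho)}{n}\right), \qquad k=1,\dots,n.
\end{equation*}
Each term in parentheses lies in $[\rho/n,\,1]$ (taking values $1$ at $k=1$ and decreasing linearly), so each $c_k > 0$. Define $a := \max_{1\le k\le n} c_k$. Then $a > 0$ is finite and by construction $\rho(1-(k-1)(1-\rho)/n) \le a\gamma^k$ for all $k = 1, \ldots, n$, verifying condition \eqref{cond1}. This produces an admissible pair $(a, \gamma)$ and proves the set of feasible solutions is non-empty.

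The only nontrivial step is the sign analysis of $p$ near $\gamma = 1$; everything else is bookkeeping. The argument hinges on the identity $p(1) = 0$ combined with the strict inequality $\rho < 1$, which supplies $p'(1) > 0$ and lets us slide slightly to the left of $1$ to obtain a strictly negative value. Once such a $\gamma$ is in hand, condition \eqref{cond1} is just a finite set of linear inequalities in $a$, easily satisfied by taking the maximum. I would not try to optimize $\gamma$ here — the statement only asks for existence — so there is no need to solve for the smallest admissible $\gamma$ or to compute an explicit bound, which is deferred to Theorem~\ref{thm_best_lin}.
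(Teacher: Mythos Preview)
Your proof is correct, but it takes a different route from the paper. The paper proceeds constructively: it first establishes an auxiliary monotonicity lemma, namely that $(1-\phi/n)^n \leq (1-\phi/(n+1))^{n+1}$ for $0\leq\phi\leq 1$ and $n\geq 1$, and uses it (with $\phi=1-\rho$) to show that the specific value $\gamma=\rho^{1/n}$ satisfies \eqref{cond2}. Your argument instead exploits the local behavior of the polynomial $p$ at $\gamma=1$: since $p(1)=0$ and $p'(1)=1-\rho>0$, continuity forces $p<0$ on a left neighborhood of $1$, so \emph{some} $\gamma\in(0,1)$ works. Both approaches then handle \eqref{cond1} identically by taking $a$ to be the maximum of finitely many positive quantities. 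Your derivative argument is shorter and avoids the auxiliary lemma entirely, which is a genuine simplification for a pure existence statement; the paper's choice, on the other hand, produces the explicit value $\gamma=\rho^{1/n}$, which is reused later (cf.\ Theorem~\ref{theorem-rate-bounds}, part $(iii)$) as the benchmark that $\gamma_0$ must beat. One minor slip: your claimed lower bound $\rho/n$ for the parenthesized factor in $c_k$ is not quite right (at $k=n$ the value is $(1+(n-1)\rho)/n$), but this is harmless since only positivity is needed.
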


\begin{proof}
See Appendix \ref{apx_prop_non_empty_set}.
\end{proof}

The result in Proposition \ref{prop_non_empty_set} in conjunction with the result in Theorem \ref{thm_lin} guarantees linear convergence of the iterates generated by the DIAG method. 
Although there are different pairs of $\{a,\gamma\}$ that satisfy the conditions in \eqref{cond1} and \eqref{cond2} and lead to the linear convergence result in \eqref{lin_IAG}, we are interested in finding the pair $\{a,\gamma\}$ that leads to the smallest linear convergence factor $\gamma$, i.e., the pair that guarantees the fastest linear convergence rate.
To find the smallest $\gamma$, we should pick the smallest $\gamma$ that satisfies the inequality $ \gamma^{n+1}-\left(1+{\rho}/{n}\right)\gamma^n +{\rho}/{n}\leq 0 $. Then choose the smallest constant $a$ that satisfies the conditions in \eqref{cond1} for the given $\gamma$. To do so, we first look at the properties of the function $h(\gamma):= \gamma^{n+1}-\left(1+{\rho}/{n}\right)\gamma^n +{\rho}/{n}$ in the following lemma.

\begin{lemma}\label{h_function_lemma}
Consider the function $h(\gamma):= \gamma^{n+1}-\left(1+{\rho}/{n}\right)\gamma^n +{\rho}/{n}$ for $\gamma\in[0,1)$. The function $h$ has only one root $\gamma_0$ in the interval $[0,1)$. Moreover, $\gamma_0$ is the smallest choice of $\gamma$ that satisfies the condition in \eqref{cond2} .
\end{lemma}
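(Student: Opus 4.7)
The plan is to analyze $h$ on $[0,1)$ via its sign at the endpoints and its monotonicity, isolating a unique root by the intermediate value theorem. First I would record the boundary values: $h(0)=\rho/n>0$ and $h(1)=1-(1+\rho/n)+\rho/n=0$, so $\gamma=1$ is always a root but lies outside the half-open interval of interest.

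Next I would differentiate to understand the shape of $h$ on $[0,1)$. A direct computation gives
\begin{equation*}
h'(\gamma)=(n+1)\gamma^{n}-n\!\left(1+\tfrac{\rho}{n}\right)\gamma^{n-1}=\gamma^{n-1}\!\left[(n+1)\gamma-(n+\rho)\right],
\end{equation*}
whose only positive root is $\gamma^{\ast}:=(n+\rho)/(n+1)$. Since $0<\rho<1$, one has $\gamma^{\ast}\in(0,1)$. From the sign of $h'$ I would conclude that $h$ is strictly decreasing on $(0,\gamma^{\ast})$ and strictly increasing on $(\gamma^{\ast},1)$. Combining this with $h(1)=0$ forces $h(\gamma^{\ast})<0$ (otherwise $h$ could not increase to $0$ at $\gamma=1$).

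With the shape of $h$ settled, I would apply the intermediate value theorem on $[0,\gamma^{\ast}]$: because $h(0)=\rho/n>0$, $h(\gamma^{\ast})<0$, and $h$ is continuous and strictly decreasing there, a unique root $\gamma_{0}\in(0,\gamma^{\ast})$ exists. On the remaining interval $(\gamma^{\ast},1)$, $h$ rises from a negative value to $0$ without touching zero, so it contributes no root in $[0,1)$. Hence $\gamma_{0}$ is the unique root of $h$ in $[0,1)$.

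Finally, for the ``smallest admissible $\gamma$'' claim, I would read off the sign pattern just established: $h(\gamma)>0$ on $[0,\gamma_{0})$, $h(\gamma_{0})=0$, and $h(\gamma)<0$ on $(\gamma_{0},1)$ (combining the strictly decreasing piece on $(\gamma_{0},\gamma^{\ast})$ with the strictly increasing piece on $(\gamma^{\ast},1)$, whose supremum is $h(1)=0$). Therefore the set $\{\gamma\in[0,1):h(\gamma)\le 0\}$ equals $[\gamma_{0},1)$, and its minimum is exactly $\gamma_{0}$, which is the statement required for condition \eqref{cond2}. I expect no real obstacle here; the only subtle point is ruling out a second root in $(\gamma^{\ast},1)$, which is handled by strict monotonicity together with the boundary value $h(1)=0$.
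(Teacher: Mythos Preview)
Your proof is correct and follows essentially the same approach as the paper: compute $h'$, locate the unique interior critical point $\gamma^{\ast}=(n+\rho)/(n+1)$, use the boundary values $h(0)>0$ and $h(1)=0$ together with the monotonicity pieces to pin down a single root $\gamma_0\in(0,\gamma^{\ast})$. If anything, your version is more careful than the paper's: you actually justify why $h(\gamma^{\ast})<0$ (via strict monotonicity on $(\gamma^{\ast},1)$ and $h(1)=0$), and you spell out the full sign pattern to establish that $\{\gamma\in[0,1):h(\gamma)\le 0\}=[\gamma_0,1)$, whereas the paper essentially asserts both of these.
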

\begin{proof}
The derivative of the function $h$ is given by 
\begin{equation}
\frac{d}{d \gamma}h =  (n+1)\gamma^n-(n+\rho)\gamma^{n-1}.
\end{equation}
Therefore, the only critical point of the function $h$ in the interval $(0,1)$ is $\gamma^*=(n+\rho)/(n+1)$. The point $\gamma^*$ is a local minimum for the function $h$, since the second derivative of the function $h$ is positive at $\gamma^*$. Notice that the objective function value $h(\gamma^*)<0$ is negative. Moreover, we know that $h(0)>0$ and $h(1)=0$. This observation shows that the function $h$ has a root $\gamma_0$ between $0$ and $\gamma^*$ and this is the only root of function $h$ in the interval $(0,1)$. Thus, $\gamma_0$ is the smallest value of $\gamma$ in the interval $(0,1)$ that satisfies the condition in \eqref{cond2}.
\end{proof}

The result in Lemma \ref{h_function_lemma} shows that the unique root of the function $h(\gamma):= \gamma^{n+1}-\left(1+{\rho}/{n}\right)\gamma^n +{\rho}/{n}$ in the interval $[0,1)$ is the smallest $\gamma$ that satisfies the condition in \eqref{cond2}. We use this result to formalize the pair $\{a,\gamma\}$ with the smallest choice of $\gamma$ which satisfies the conditions in \eqref{cond1} and \eqref{cond2}.

\begin{theorem}\label{thm_best_lin}
Consider the DIAG method in \eqref{incremental_qn_update}. Let the conditions in Assumption \ref{cnvx_lip} hold, and set the stepsize as $\eps=2/(\mu+L)$. Then, the sequence of iterates generated by DIAG is linearly convergent as
\begin{equation} \label{lin_IAG_2}
\|\bbx^{k}-\bbx^*\|  \leq a_0 \gamma_0^k \|\bbx^{0}-\bbx^*\|,
\end{equation}
where $\gamma_0$ is the unique root of the equation 
\begin{align}\label{def_gamma_zero}
\gamma^{n+1}-\left(1+\frac{\rho}{n}\right)\gamma^n +\frac{\rho}{n}=0,
\end{align}
in the interval $[0,1)$ and $a_0$ is given by 
\begin{align}
a_0= \max_{i\in\{1,\dots,n\}}  \rho \left(1-\frac{(i-1)(1-\rho)}{n}\right)\gamma_0^{-i}.
\end{align}
\end{theorem}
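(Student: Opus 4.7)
My plan is to treat this theorem as essentially a corollary of Theorem \ref{thm_lin} combined with Lemma \ref{h_function_lemma}. Theorem \ref{thm_lin} already reduces linear convergence $\|\bbx^k-\bbx^*\|\leq a\gamma^k\|\bbx^0-\bbx^*\|$ to the existence of a pair $(a,\gamma)$ satisfying the coupled conditions \eqref{cond1} and \eqref{cond2}. So the remaining task is simply to produce the specific pair $(a_0,\gamma_0)$ advertised in the theorem and verify that it meets both conditions.

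First I would handle \eqref{cond2}. I pick $\gamma=\gamma_0$, the unique root of $h(\gamma):=\gamma^{n+1}-(1+\rho/n)\gamma^n+\rho/n$ in $[0,1)$ whose existence and uniqueness are guaranteed by Lemma \ref{h_function_lemma}. By definition $h(\gamma_0)=0$, so condition \eqref{cond2} holds (with equality). I would also note that $\gamma_0\in(0,1)$, since $h(0)=\rho/n>0$ rules out $\gamma_0=0$, giving a valid linear rate.

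Next I would handle \eqref{cond1}. For the fixed value $\gamma=\gamma_0$, condition \eqref{cond1} is a finite system of $n$ linear inequalities in $a$, namely
\begin{equation*}
a\ \geq\ \rho\left(1-\frac{(i-1)(1-\rho)}{n}\right)\gamma_0^{-i}\qquad \text{for } i=1,\dots,n.
\end{equation*}
The smallest $a$ meeting all of these simultaneously is exactly
\begin{equation*}
a_0 \ =\ \max_{i\in\{1,\dots,n\}}\ \rho\left(1-\frac{(i-1)(1-\rho)}{n}\right)\gamma_0^{-i},
\end{equation*}
which is the definition given in the theorem. Since each term is positive (the factor $1-(i-1)(1-\rho)/n\geq 1-(n-1)(1-\rho)/n>\rho>0$ and $\gamma_0^{-i}>0$), we have $a_0>0$, so the pair $(a_0,\gamma_0)$ is admissible for Theorem \ref{thm_lin}.

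Finally, I would invoke Theorem \ref{thm_lin} with this choice of $(a,\gamma)=(a_0,\gamma_0)$ to conclude $\|\bbx^k-\bbx^*\|\leq a_0\gamma_0^k\|\bbx^0-\bbx^*\|$. The only subtlety — which is not really an obstacle once Lemma \ref{h_function_lemma} is in hand — is to argue that $\gamma_0$ really is the \emph{best} (smallest) rate one can extract from the framework of Theorem \ref{thm_lin}: any smaller $\gamma$ would violate \eqref{cond2} by Lemma \ref{h_function_lemma}, so our choice is optimal among all pairs certified by that theorem. This justifies both the claim and the wording of the theorem.
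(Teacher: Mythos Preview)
Your proposal is correct and matches the paper's own proof, which simply states that the result follows from Theorem~\ref{thm_lin} and Lemma~\ref{h_function_lemma}. You have merely spelled out the natural details---choosing $\gamma=\gamma_0$ so that \eqref{cond2} holds with equality, then taking $a_0$ as the smallest constant satisfying the finitely many inequalities in \eqref{cond1}---exactly as intended.
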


\begin{proof}
It follows from the results in Theorem \ref{thm_lin} and Lemma \ref{h_function_lemma}.
\end{proof}

The result in Theorem \ref{thm_best_lin} shows R-linear convergence of the DIAG iterates with the linear convergence factor $\gamma_0$; {however, it does not show that $\gamma_0^n$ is smaller than the linear convergence factor of GD. In the following section, we aim to show that the linear convergence factor of DIAG after $n$ iterations, which is $\gamma_0^n$, is strictly smaller than the linear factor of GD.}

%
%
%


%
\section{Worst-case asymptotic rate of DIAG}\label{sec:worst_case}

{In the previous section, we proved that the DIAG method outperforms GD \textit{after each pass} (Corollary 3), but this result does not characterize the linear convergence factor for the sequence of errors $\|\bbx^k-\bbx^*\|$ generated by DIAG. The result in Theorem 7 shows R-Linear convergence of the sequence  $\|\bbx^k-\bbx^*\|$ to zero; however, it does not show that $\gamma_0^n$ is smaller than the linear convergence factor of GD. In this section, we aim to derive a result that shows the sequence  $\|\bbx^k-\bbx^*\|$ has a linear convergence rate with constant $\gamma_0$ such that $\gamma_0^n$ is strictly smaller than $\rho$, which is the linear convergence factor of GD. To do so, we define the sequence $d^k$ as}
\begin{equation}\label{def_upp_bound}
d^{k+1} = \rho\ \! \frac{d^k + d^{k-1} + \cdots + d^{k-n+1}}{n}
\end{equation}
where $\rho =({\kappa-1})/(\kappa+1)$ and $d^j := \|\bbx^j - \bbx^*\|$ for $j=0,1,2,\dots,n-1$.
 It follows directly from \eqref{fundamental_lemma_claim} that the sequence $d^k$ provides an upper bound for the sequence of the errors $\|\bbx^k - \bbx^*\|$ for all $k\geq0$. In other words, we have the relation
\begin{equation}
   \| \bbx^{k} - \bbx^* \| \leq d^{k} \quad \forall \quad k\geq 0.
   \end{equation}
Next, we characterize the convergence properties of the sequence of $d^k$ which provides an upper bound for the desired error sequence $\|\bbx^k-\bbx^*\|$. To do so, we rewrite the update of the sequence $d^k$ in a matrix form which is more suitable for the analysis. Define the column vector $\bbd^k=[d^{k-1};\dots;d^{k-n}]\in \reals^{n}$ as the concatenation of the last $n$ values of the sequence $d^k$ up to step $k$. Considering the definition of $\bbd^k$ and the update of the sequence $d^k$ we can write     
\begin{equation}
	 \bbd^{k+1} = \bbM_\rho \bbd^k \quad \mbox{where} \qquad 
	 \bbM_\rho :=   \begin{bmatrix}
   						   \frac{\rho}{n} & \frac{\rho}{n} & \hdots & \frac{\rho}{n}       \\
    					                      1 &                   0 & \hdots & 0  \\
    					                      0 &                   1 & \hdots & 0  \\
    					                      0 &                   0 &         1 & 0 
   					  \end{bmatrix}.
 \label{fixed-point-iter}
\end{equation}

We observe that the matrix $\bbM_\rho\in \reals^{n\times n}$ is a non-negative matrix whose eigenvalues determine the asymptotic growth rate of the sequence $\bbd^k$ and hence of $d^k$. It is straightforward to check that the characteristic polynomial of $\bbM_\rho$ is

\begin{equation} T(\lambda) = \lambda^n - \frac{\rho}{n} \lambda^{n-1} - \frac{\rho}{n} \lambda^{n-2} - \hdots -  \frac{\rho}{n} = \frac{\lambda^{n+1}-\left(1+\frac{\rho}{n}\right)\lambda^n +\frac{\rho}{n}}{\lambda - 1} 
\label{char_polynomial}
\end{equation}
whose roots are the eigenvalues of the matrix $\bbM_\rho$. In the remainder of this section, we will infer information about the eigenvalues of $\bbM_\rho$ using Perron-Frobenius (PF) theory. This theory is well developed for positive matrices where all the entries are strictly positive but $\bbM_\rho$ has zero entries and is therefore not positive. Nevertheless, the PF theory has been successfully extended to certain non-negative matrices called \emph{irreducible} matrices. %
A square matrix $\bbA$ is called irreducible if for every $i$ and $j$, there exists an $r$ such that $\bbA^r(i,j)>0$. In the next lemma, we prove that the matrix $\bbM_\rho$ is irreducible which will justify our use of PF theory developed for irreducible matrices. 

\begin{lemma} The matrix $\bbM_\rho$ is irreducible for any $\rho>0$.  
\end{lemma}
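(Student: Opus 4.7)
The plan is to use the standard graph-theoretic criterion for irreducibility: a non-negative square matrix $\bbA$ is irreducible if and only if its associated directed graph $G(\bbA)$ -- with vertex set $\{1,\dots,n\}$ and a directed edge from $i$ to $j$ whenever $\bbA(i,j)>0$ -- is strongly connected. So the task reduces to reading off the non-zero pattern of $\bbM_\rho$ and checking that any vertex can reach any other vertex via a directed path.

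The structure of $\bbM_\rho$ makes this transparent. The first row has every entry equal to $\rho/n > 0$, so in $G(\bbM_\rho)$ vertex $1$ has an outgoing edge to every vertex $j \in \{1,\dots,n\}$ (including a self-loop). Every other row $i \in \{2,\dots,n\}$ has a single $1$ in column $i-1$, so vertex $i$ has exactly one outgoing edge, going to vertex $i-1$. Consequently, from any vertex $i \geq 2$ the path $i \to i-1 \to \cdots \to 1$ reaches vertex $1$ in $i-1$ steps, yielding $\bbM_\rho^{\,i-1}(i,1) \geq 1$. Combining this with the fact that vertex $1$ can jump to any vertex $j$ in a single step gives $\bbM_\rho^{\,i}(i,j) \geq \rho/n > 0$ for all $i \geq 2$ and all $j$, and $\bbM_\rho(1,j) = \rho/n > 0$ handles $i=1$. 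This exhibits, for every pair $(i,j)$, an exponent $r$ with $\bbM_\rho^{\,r}(i,j)>0$, which is exactly irreducibility.

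I do not anticipate any real obstacle here, as the argument is just a direct inspection of the sparsity pattern. The only minor care needed is ensuring the construction covers all cases $(i,j)$ including $i=j$ (handled by using the self-loop at vertex $1$ to close any cycle), and noting that the assumption $\rho > 0$ is used only to guarantee that the entries of the first row are strictly positive so that the edges from vertex $1$ in $G(\bbM_\rho)$ actually exist.
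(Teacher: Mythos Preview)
Your proof is correct. You use the standard graph-theoretic criterion: read off the nonzero pattern of $\bbM_\rho$, note that vertex $1$ points to every vertex while each vertex $i\geq 2$ points to $i-1$, and conclude strong connectivity. This yields, for each pair $(i,j)$, a specific exponent $r=r(i)$ (namely $r=1$ when $i=1$ and $r=i$ when $i\geq 2$) with $\bbM_\rho^{\,r}(i,j)>0$.

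The paper takes a somewhat different route: it fixes a single exponent $r=n$ and argues that $\bbM_\rho^{\,n}(i,j)>0$ for \emph{all} $i,j$ simultaneously, by interpreting $\bbM_\rho^{\,n}\bbe_j$ through the scalar recurrence $d^{k+1}=\frac{\rho}{n}(d^k+\cdots+d^{k-n+1})$ initialized at $\bbd^n=\bbe_j$ and checking that after $n$ steps every coordinate becomes positive. This actually establishes the stronger property that $\bbM_\rho$ is \emph{primitive}, though only irreducibility is needed for the Perron--Frobenius results invoked afterward. Your argument is the more elementary and direct one for irreducibility alone; the paper's argument buys a bit more structure (primitivity) at the cost of tracking the recurrence, but that extra strength is not exploited later.
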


\begin{proof} By the definition of irreducibility, we need to show that for every $i$ and $j$, there exists an $r$ such that the $r$-th power of the matrix $\bbM_\rho$ is entrywise positive, i.e. $\bbM_\rho^r(i,j)>0$. Let $\bbe_1, \bbe_2, \dots, \bbe_n$ be the standard basis for $\mathbb{R}^n$. It suffices to show that we can choose $r=n$ for all $i$ and $j$, i.e. 
    \begin{equation} \bbM_\rho^n(i,j) = \bbe_i^T \bbM_\rho^n \bbe_j > 0, \quad \forall \quad i, j.
    \label{suff-cond-irreducible}
    \end{equation}
By the definition of the recurrence \eqref{fixed-point-iter}, we have 
	$$ \begin{bmatrix}
    d^{2n-1}        \\
    d^{2n-2} \\
    \hdots \\
    d^{n}
\end{bmatrix} = \bbd^{2n} = (\bbM_\rho)^n \bbd^n =  (\bbM_\rho)^n \begin{bmatrix}
    d^{n-1}        \\
    d^{n-2} \\
    \hdots \\
    d^{0}
\end{bmatrix}.$$
Fix any $j \in \{1, \dots, n\}$ and choose $\bbd^n = \bbe_j$ (this would correspond to the initalization $d_{n-\ell} = 1$ for $\ell = j$ and $d_{n-\ell} = 0$ for $1\leq \ell\leq n$ and $\ell \neq j$). Then, 
%
using the definition of $\bbM_\sigma$, it is easy to check that such an initialization of $\bbd^n$ leads to $d^{n} = \rho/n > 0$, $d^{n+1}>0, \dots, d^{2n-1}>0$. Therefore, for every $i$ and $j$, we have 
\begin{equation}
(\bbM_\rho)^n(i,j) = \bbe_i^T (\bbM_\rho)^n \bbe_j  = d^{2n-i}> 0
\end{equation}
which proves \eqref{suff-cond-irreducible} and completes the proof.
\end{proof}

The following result shows that  the sequence $d^k$ converges to zero linearly with a constant $\gamma_0$  where $\gamma_0$  is defined by \eqref{def_gamma_zero}. We also derive upper and lower bounds on $\gamma_0$.

\begin{theorem}\label{theorem-rate-bounds} Consider the constant $\rho=(\kappa-1)/(\kappa+1) \in (0,1)$ and let $\lambda^*(\rho)$ be the spectral radius of the matrix $\bbM_\rho$. Then, 

\begin{enumerate}
	
        \item [$(i)$] $\lambda^*(\rho)$ is the largest real root of the the polynomial characteristic polynomial $T(\lambda)$. Furthermore, it is a simple root.
		  \item [$(ii)$] We have the limit 
		  \begin{equation}\label{asym_lin_rate}
		  \lim_{k\to\infty} d^{k+1} / d^k = \lambda^*(\rho).
		  \end{equation}
		  \item [$(iii)$] For integer numbers $n>1$ the constant $\lambda^*(\rho)$ is bounded below and above as
		 		\begin{equation}\label{bounds_on_rate}
			\rho	\leq \lambda^*(\rho) <\sqrt[n]{\rho} .
				\end{equation}
		 \item [$(iv)$] We have $\lambda^*(\rho) = \gamma_0$ where $\gamma_0$ is the largest real root of the polynomial $h(\lambda):=\lambda^{n+1}-\left(1+({\rho}/{n})\right)\lambda^n +({\rho}/{n})$ in the interval $[0,1)$.		
\end{enumerate} 
\end{theorem}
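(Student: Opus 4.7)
The plan is to combine Perron--Frobenius (PF) theory applied to the nonnegative irreducible matrix $\bbM_\rho$ with the algebraic identity $h(\lambda)=(\lambda-1)T(\lambda)$, which relates the characteristic polynomial $T$ of $\bbM_\rho$ to the polynomial $h$ of Lemma~\ref{h_function_lemma}. I would handle the items in the order (i), (iii), (iv), (ii), since (iii) uses only (i), (iv) follows from (iii), and (ii) can be treated separately once (i) is in place.

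For (i), nonnegativity of $\bbM_\rho$ together with the irreducibility established in the preceding lemma lets one invoke the PF theorem: $\lambda^*(\rho)$ is a simple positive real eigenvalue and, since no other eigenvalue has larger modulus, it is in particular the largest real root of $T(\lambda)$. For (iii) I would first localize $\lambda^*\in(0,1)$: positivity is from PF, and no real root lies in $[1,\infty)$ because $T(1)=1-\rho>0$, while for $\lambda>1$ the bound $\sum_{i=0}^{n-1}\lambda^i<n\lambda^{n-1}$ yields $T(\lambda)>\lambda^{n-1}(\lambda-\rho)>0$. The equation $T(\lambda^*)=0$ then reads
\[
(\lambda^*)^n=\frac{\rho}{n}\sum_{i=0}^{n-1}(\lambda^*)^i,
\]
and both bounds drop out: the inequality $(\lambda^*)^i\geq(\lambda^*)^{n-1}$ for $i\leq n-1$ (valid since $\lambda^*<1$) gives $\lambda^*\geq\rho$, while for $n>1$ at least one term $(\lambda^*)^i$ with $i\geq 1$ is strictly below $1$, so $\sum_{i=0}^{n-1}(\lambda^*)^i<n$, forcing $(\lambda^*)^n<\rho$, i.e., $\lambda^*<\rho^{1/n}$.

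Part (iv) is then immediate: the factorization $h(\lambda)=(\lambda-1)T(\lambda)$ makes $\lambda^*\in(0,1)$ a root of $h$ lying in $[0,1)$, and by Lemma~\ref{h_function_lemma} the unique such root is $\gamma_0$, so $\lambda^*(\rho)=\gamma_0$. For (ii), I would exploit the positive diagonal entry $(\bbM_\rho)_{1,1}=\rho/n>0$, which combined with irreducibility yields primitivity of $\bbM_\rho$; the PF theorem for primitive matrices then gives the strong limit $\bbM_\rho^k/(\lambda^*)^k\to\bbv\bbu^{\top}$, where $\bbv,\bbu$ are strictly positive right and left Perron eigenvectors normalized by $\bbu^{\top}\bbv=1$. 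Applied to the nonnegative nonzero initial vector $\bbd^n$ (the trivial case $\bbx^0=\bbx^*$ is excluded), this yields $\bbd^k/(\lambda^*)^k\to(\bbu^{\top}\bbd^n)\bbv$, a strictly positive vector, and reading off the first coordinate delivers $d^{k+1}/d^k\to\lambda^*(\rho)$. The most delicate point is precisely (ii): existence of the one-sided ratio limit requires primitivity (not merely irreducibility) so that $\lambda^*$ strictly dominates the other eigenvalues in modulus, and requires the projection $\bbu^{\top}\bbd^n$ onto the Perron eigenspace to be nonzero; both ingredients come for free from the positive $(1,1)$ entry and from the nonnegativity of the initial data.
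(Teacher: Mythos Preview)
Your proposal is correct. Parts (i), (ii), and (iv) follow essentially the paper's route: Perron--Frobenius for irreducible nonnegative matrices gives (i); the strong ratio limit $\bbM_\rho^k/\lambda^*(\rho)^k\to\bbv\bbu^\top$ gives (ii) (the paper simply cites the relevant Horn--Johnson theorem, while you spell out that primitivity comes from the positive $(1,1)$ entry and that $\bbu^\top\bbd^n>0$ because $\bbd^n\ge0$ is nonzero---this extra care is welcome but not a departure in spirit); and the factorization $h(\lambda)=(\lambda-1)T(\lambda)$ together with Lemma~\ref{h_function_lemma} gives (iv).

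Part (iii), however, is handled by a genuinely different and more elementary argument. The paper obtains the lower bound $\lambda^*(\rho)\ge\rho$ from a general spectral-radius/row-sum inequality for nonnegative matrices, and the strict upper bound $\lambda^*(\rho)<\rho^{1/n}$ by explicitly verifying the componentwise inequality $\bbM_\rho^{2n}\mathbf{1}<\rho^2\mathbf{1}$ and then invoking a Horn--Johnson corollary. You instead first pin down $\lambda^*\in(0,1)$ by checking $T(\lambda)>0$ for $\lambda\ge1$, and then read both bounds directly off the scalar identity $(\lambda^*)^n=\tfrac{\rho}{n}\sum_{i=0}^{n-1}(\lambda^*)^i$ via the trivial inequalities $(\lambda^*)^{n-1}\le(\lambda^*)^i\le1$. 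Your route avoids any matrix-power computation and any appeal to black-box spectral bounds, at the modest cost of first having to localize $\lambda^*$ below $1$; the paper's route is more ``off the shelf'' but requires tracking the iterates $d^n,\dots,d^{3n-1}$ to establish the componentwise inequality.
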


\begin{proof} 
See Appendix \ref{apx_theorem-rate-bounds}.
\end{proof}


{The results in Theorem \ref{theorem-rate-bounds} study the convergence properties of the sequence $d^k$ defined in \eqref{def_upp_bound} which is an upper bound for the sequence of DIAG error $\|\bbx^k-\bbx^*\|$. The last result in Theorem \ref{theorem-rate-bounds} shows that the constant $\lambda^*(\rho)$ which is the spectral radius of the matrix $\bbM_\rho$, is equal to the linear convergence factor $\gamma_0$ of DAIG defined as the root of the polynomial in \eqref{def_gamma_zero}. The second result indicates that the sequence $d^k$ has an asymptotic linear convergence rate with the constant $\lambda^*(\rho)=\gamma_0$. This convergence result
 is not stronger than the result in Theorem \ref{thm_best_lin}, since it holds asymptotically, but we report this result since it shows that there exists a sequence which achieves the theoretical upper bound proven for the DIAG method. Also, this result is interesting since it proves the R-linear convergence of DIAG from an entirely different approach based on Perron-Frobenius theory. The most important result in Theorem \ref{theorem-rate-bounds} is the third result which shows that $\lambda^*(\rho)$, which is equal to $\gamma_0$ according to the last result, is strictly smaller than $\sqrt[n]{\rho}$. Based on the inequality in \eqref{bounds_on_rate}, we obtain that the linear convergence factor of DIAG after running for $n$ iterations, which is $\gamma_0^n$, is strictly smaller than $\rho$ the decrement factor of GD after one pass over the set of functions.}

{It is worth mentioning that the upper bound sequence ${d_k}$ achieves the asymptotic Q-linear convergence with ration $\gamma_0$ doesn't necessarily mean the sequence $||\bbx^k-\bbx^*||$ cannot achieve a better rate, but it implies the rate cannot be worse than $\gamma_0$. Therefore, we refer to this result as the asymptotic \textit{worst-case scenario} analysis of DIAG.}

{
\begin{remark}
Note that  the right inequality in \eqref{bounds_on_rate} can also be achieved using the results in Section \ref{sec_Convergence}. To be more specific, first one may show $\rho=(1-\frac{1-\rho}{1})<(1-\frac{1-\rho}{2})^2<\dots<(1-\frac{1-\rho}{n})^n$. This sequence of inequalities implies that $\rho<(1-\frac{1-\rho}{n})^n$, which is equivalent to the inequality $\rho^{\frac{n+1}{n}}-(1+\frac{\rho}{n})\rho +\frac{\rho}{n}<0$, and, therefore, we obtain that $h(\rho^{1/n})<0$. Further, according to the result in Lemma \ref{h_function_lemma}, $\lambda^*(\rho)=\gamma_0$ is the unique solution to $h(\lambda) = 0$, i.e., $h(\lambda^*(\rho))=0$, and $\lambda^*(\rho)<\lambda$ for all $\lambda \in [0, 1)$ with $h(\lambda) < 0$. Combining these two results leads to the conclusion that $\lambda^*(\rho)<\rho^{1/n}$.

\end{remark}
}

\begin{figure}
\centering
\includegraphics[width=0.49\linewidth]{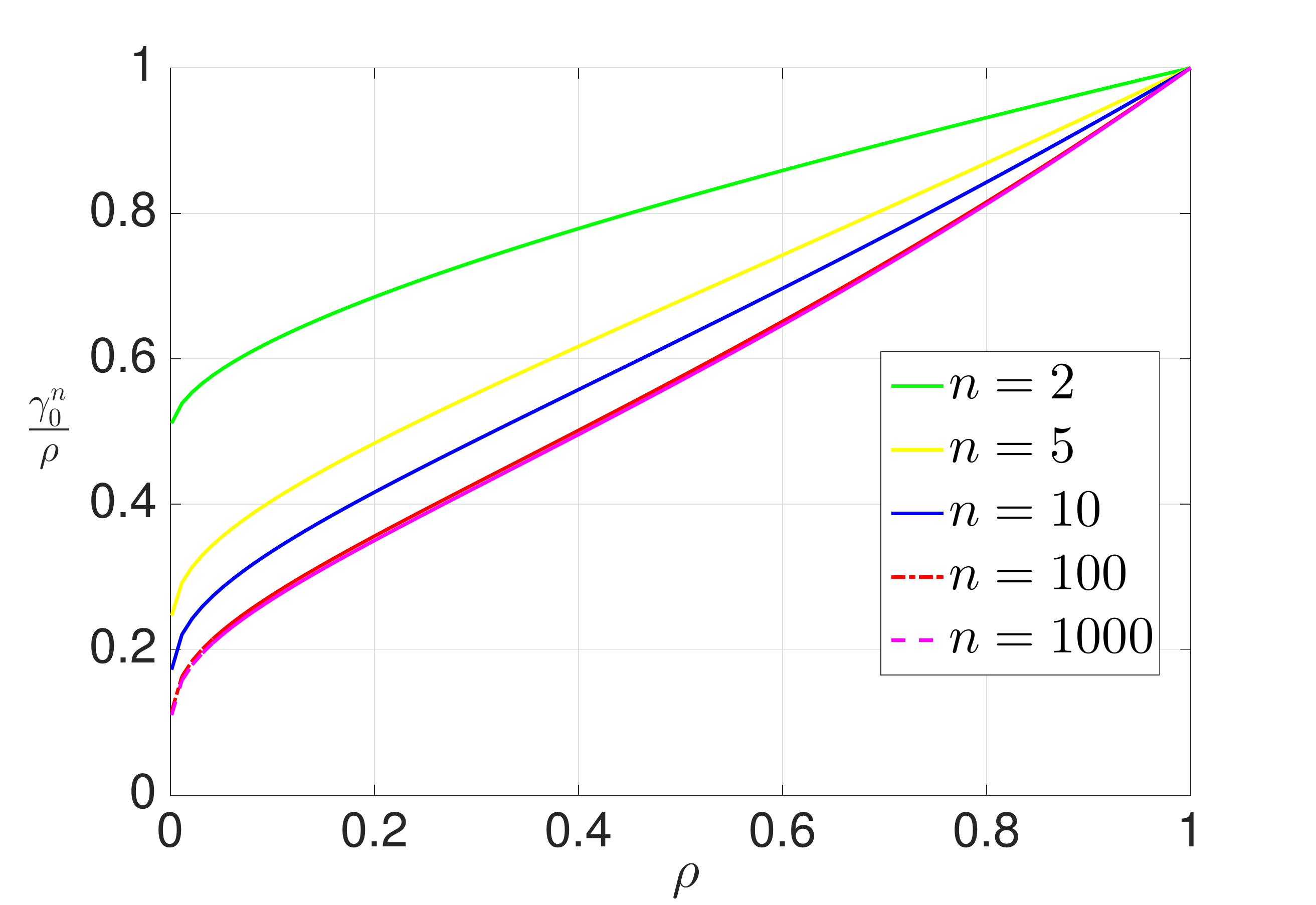}
\includegraphics[width=0.49\linewidth]{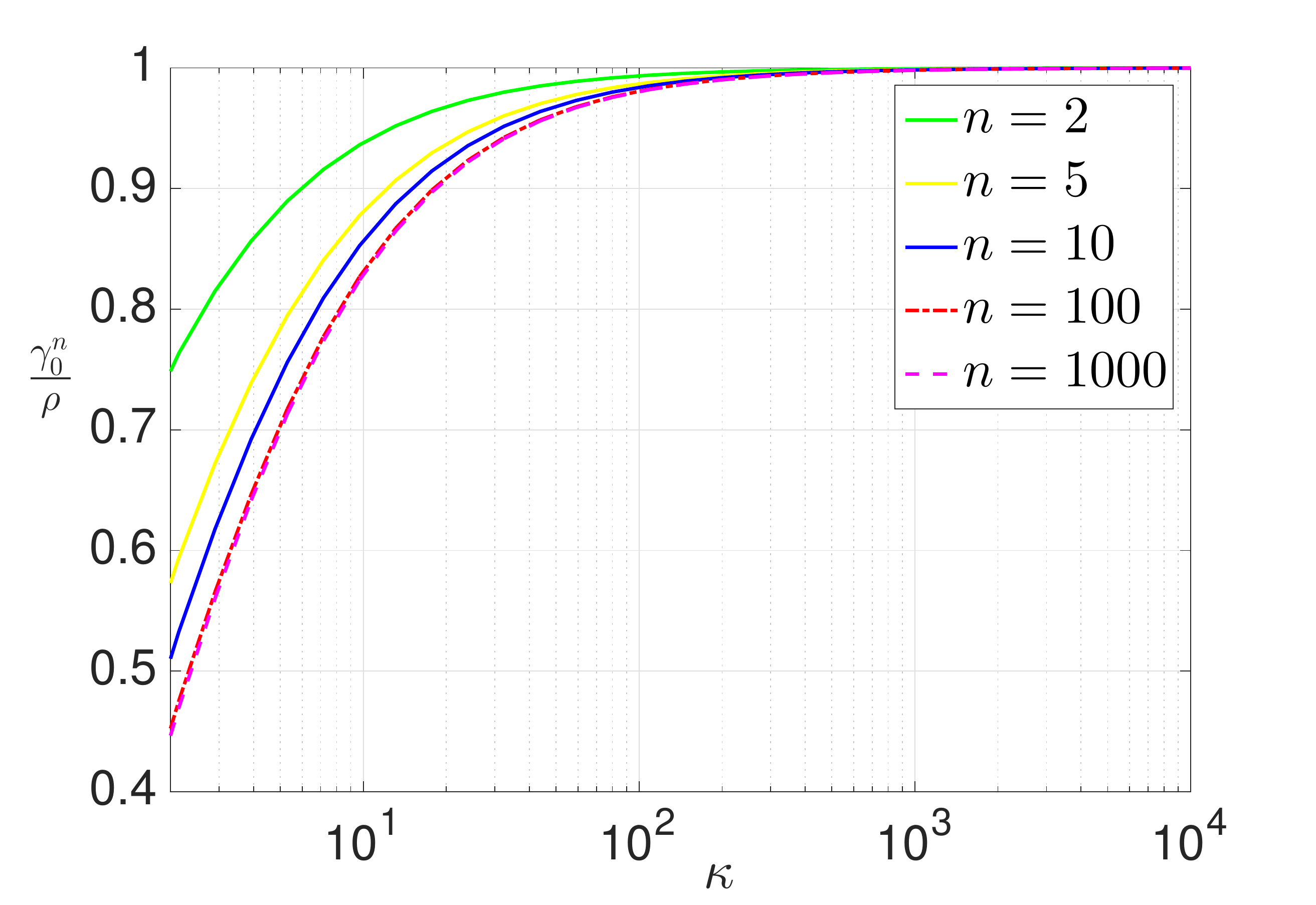}
\vspace{-3mm}
\caption{Comparison of the linear convergence factors of DIAG and GD via the ratio $\gamma_0^n/\rho$ in terms of $\rho$ (left) and $\kappa$ (right) for different choices of $n$.}
\vspace{-3mm}
\label{fig:comp} \end{figure}

{Indeed, formalizing the gap between the linear convergence factors of DIAG and GD requires access to an explicit expression for the largest root of the polynomial in~\eqref{char_polynomial}. However, for specific choices of $n$ and $\kappa$ one can evaluate the DIAG linear convergence factor $\gamma_0$ using polynomial solvers and compare it with the linear factor of GD. We, therefore, compare the ratio $\gamma_0^n/\rho$ for some choices of $\rho$ and $n$ by finding the root of the polynomial using a MATLAB solver. The outcome of the comparison is illustrated in Fig. \ref{fig:comp}.  As we observe in the left plot in Fig. \ref{fig:comp}, for the case of $n=2$, the variable $\gamma_0^n/\rho$, which is the ratio between the linear convergence factors of DIAG and GD after one pass over the functions, is close to $0.5$ for small choices of $\rho$, while it approaches $1$ as $\rho$ becomes closer to $1$. Therefore, for smaller choices of $\rho$ the gain in using DIAG instead of GD is more significant comparing to the cases that $\rho$ is close to $1$. Similar pattern can be observed for other choices of $n$. Conversely, for a fixed choice of $\rho$, when the number of functions $n$ increases the ratio $\gamma_0^n/\rho$ becomes smaller. This behavior shows that by increasing the number of functions $n$ the gap between the performances of DIAG and GD increases and DIAG becomes more favorable. Since $\rho=(k-1)/(k+1)$ is an increasing function of the problem condition number~$\kappa$, similar conclusions can be achieved by comparing the ratio $\gamma_0^n/\rho$ for different choices of $n$ and $\kappa$ as demonstrated in the right plot in Fig.~\ref{fig:comp}. It is also worth mentioning, in all the illustrated curves, the ratio $\gamma_0^n/\rho$ is smaller than $1$ which verifies our theoretical conclusion that DIAG outperforms GD for all choices of $n$ and $\kappa$.}

\section{Numerical experiments}\label{sec:simulations}

In this section, we study the performance of the proposed DIAG method and compare it with existing alternative first-order methods. To do so, we first apply DIAG to solve a family of quadratic programming problems. Then, we evaluate the performance of DIAG and other first-order methods in solving a logistic regression minimization problem.

\subsection{Quadratic programming example}
To study the effect of number of functions $n$ and problem condition number $\kappa$ on the performance of the GD, IAG, and DIAG methods, we first apply these algorithms in solving a quadratic  programming problem, where we can tune the problem condition number. In particular, consider the optimization problem 
\begin{align}\label{eq_simulation_problem}
\min_{\bbx \in \reals^p} f(\bbx) := \frac{1}{n}\sum_{i=1}^n \frac{1}{2} \bbx^T \bbA_i \bbx + \bbb_i^T \bbx,
\end{align}
where each matrix $\bbA_i \in \reals^{p \times p}$ is a positive definite diagonal matrix and each vector $\bbb_i \in \reals^p$ is randomly chosen from the box $[0,1]^p$. To control the problem condition number, the first $p/2$ diagonal elements of $\bbA_i$ are chosen uniformly at random from the interval $[1, 10^1, \hdots, 10^{\eta/2}]$ and its last $p/2$ elements chosen from the interval $[1, 10^{-1}, \hdots, 10^{-\eta/2}]$. This selection resulting in the sum matrix $\sum_{i=1}^n \bbA_i$ having eigenvalues in the range $[n 10^{-\eta/2}, n 10^{\eta/2}]$. In our experiments, we fix the variable dimension as $p=20$ and the number of functions as $n=200$. Moreover, the stepsizes of GD and DIAG are set as their best theoretical stepsizes which are $\eps_{GD}=2/(\mu+L)$ and $\eps_{DIAG}=2/(\mu+L)$, respectively. {Note that the stepsize suggested in \cite{gurbuzbalaban2015convergence} for IAG is $\eps_{IAG}=(0.32\mu)/((nL)(L+\mu))$; however, this choice of stepsize leads to slow convergence of IAG in practice. Thus, we use the stepsize $\eps_{IAG}=2/(nL)$ which performs better than the one suggested in \cite{gurbuzbalaban2015convergence}. }

We compare these methods in terms of the total number of gradient evaluations. Note that comparing these methods in terms of the total number of iterations would not be fair since each iteration of GD requires $n$ gradient evaluations, while IAG and DIAG only require one gradient computation per iteration.


%
\begin{figure} [t]
\centering
\includegraphics[width=0.495\linewidth]{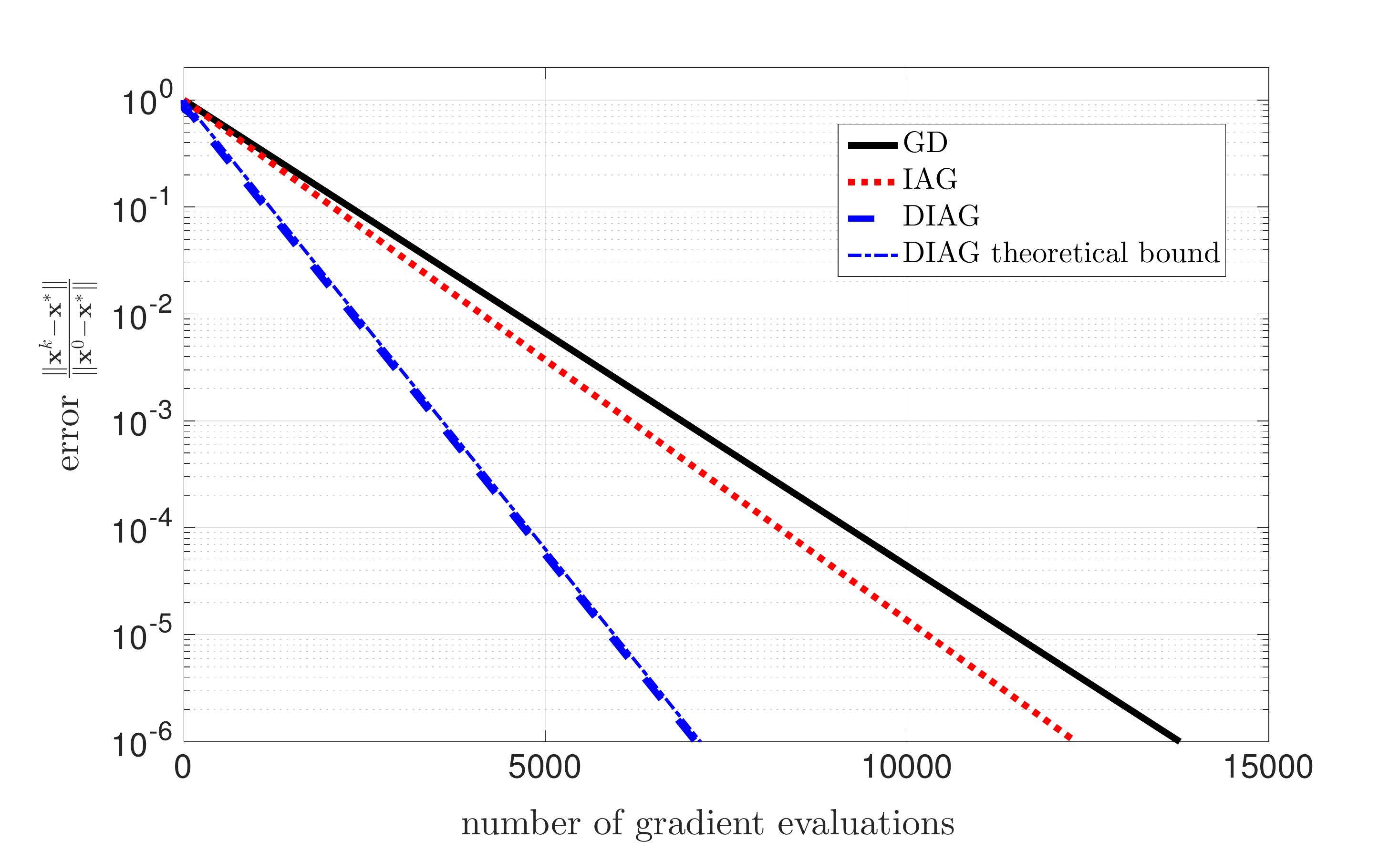}
\includegraphics[width=0.495\linewidth]{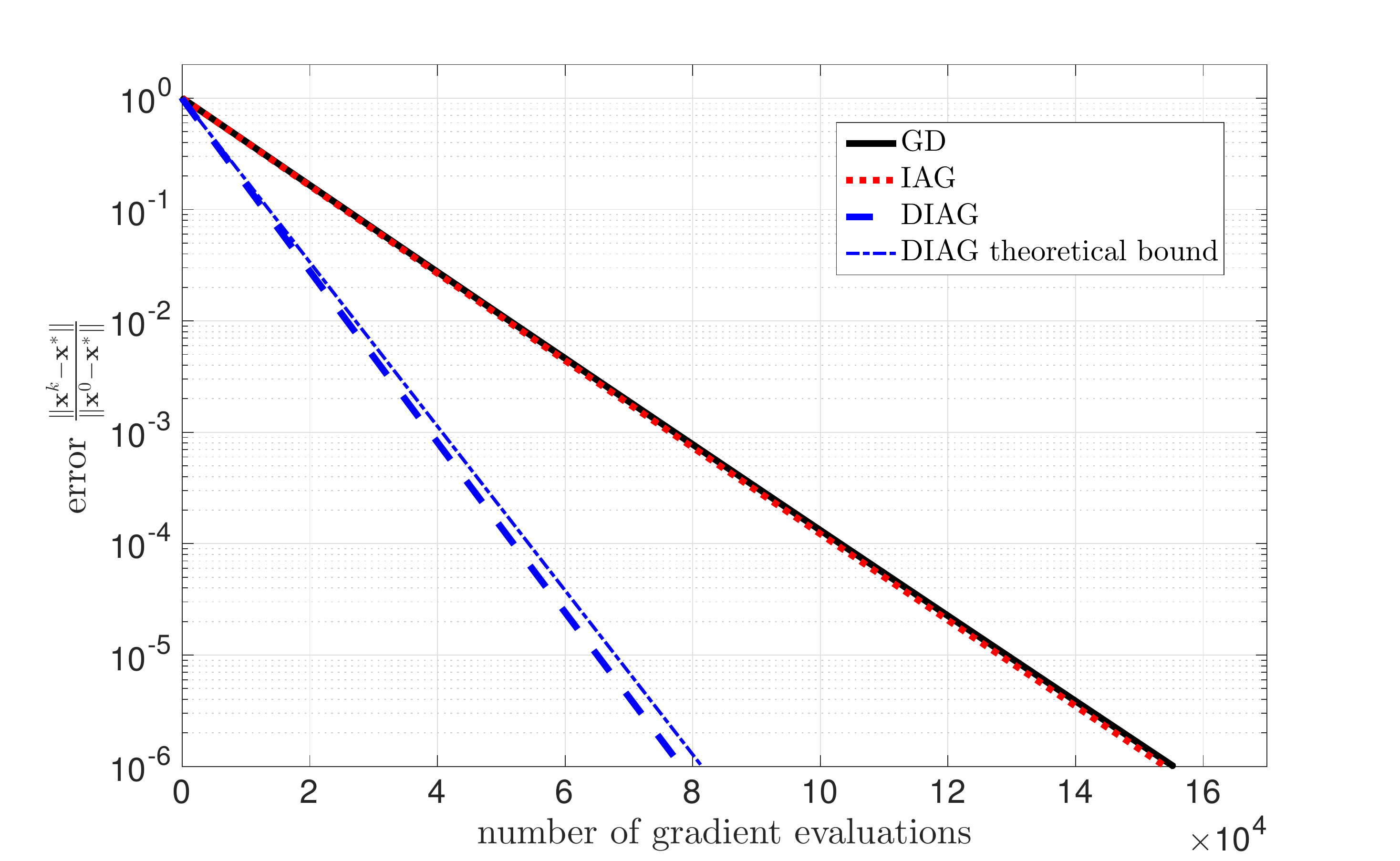}
\vspace{-3mm}
\caption{Relative error of GD, IAG, and DIAG versus number of gradient evaluations for the quadratic programming in \eqref{eq_simulation_problem} with $n=200$ and $\kappa=10$ (left) and $n=200$ and $\kappa=117$ (right). When the condition number is small, the IAG method performs slightly better than GD, while DIAG has the fastest convergence path. For the case that the condition number is larger, IAG and GD have similar convergence paths, and the best performance belongs to DIAG.}
\vspace{-3mm}
\label{fig:1} \end{figure}


We first consider the case that $\eta=1$ and use the realization with condition number $\kappa=10$ to have a relatively small condition number. The left plot in Fig.~\ref{fig:1} demonstrates convergence paths of the normalized error $\|\bbx^k - \bbx^*\|/{\|\bbx^0 - \bbx^*\|}$ for IAG, DIAG, and GD when $n=200$ and $\kappa=10$. As we observe, IAG performs slightly better than GD, while the best performance belongs to DIAG. To be more precise, DIAG requires $7,069$ gradient evaluations (approximately $35$ passes over the dataset) to achieve the relative error of $\|\bbx^k-\bbx^*\|/\|\bbx^0-\bbx^*\|=10^{-6}$, while IAG requires $12,330$ gradient evaluations (approximately $61$ passes over the dataset) to achieve the same accuracy. The GD method has the worst performance and achieves the relative error $\|\bbx^k-\bbx^*\|/\|\bbx^0-\bbx^*\|=10^{-6}$ after $68$ iterations which is equivalent to $13,600$ gradient evaluations. {We have also illustrated the theoretical bound for the DIAG method in Fig. \ref{fig:1}, which is computed by finding the root of the polynomial in \eqref{def_gamma_zero} for  $n=200$ and $\rho=(\kappa-1)/(\kappa+1)=9/11$. In this case, the root of the polynomial is $\gamma_0=0.998067$ and the DIAG theoretical bound curve corresponds to the sequence $0.998067(\|\bbx^k-\bbx^*\|/\|\bbx^0-\bbx^*\|)$. As we observe, the performance of the DIAG method is almost identical to its proven theoretical bound which shows the tightness of the bound for DIAG.
 }

Comparison of the convergence guarantees for GD, IAG, and DIAG shows that the IAG method is more sensitive to the problem condition number, since the linear convergence factor of IAG is of the order $1-\mathcal{O}(1/\kappa^2)$, while the linear convergence factor of GD and DIAG are at the order of  $1-\mathcal{O}(1/\kappa)$. To study the effect of problem condition number in practice, we increase the constant $\eta$ to have a poorly conditioned problem. In particular, we increase the problem condition number by setting $\eta=2$ and using the realization with condition number $\kappa=117$. The right plot in Fig.~\ref{fig:1} illustrates performance of these methods for the case that $n=200$ and $\kappa=117$. We observe that the convergence path of IAG is almost identical to the one for GD. This observation verifies that the performance of IAG worsens more significantly by increasing the problem condition number. Interestingly, the relative performance of DIAG and GD does not change by increasing the problem condition number. To be more specific, for the case that $n=200$ and $\kappa=117$, GD and IAG reach the relative error $\|\bbx^k-\bbx^*\|/\|\bbx^0-\bbx^*\|=10^{-6}$ after $1.54\times 10^5$ gradient evaluations, while DIAG requires only $7.8\times 10^4$ gradient computations to achieve the same accuracy. {As in the previous case, we also compare the performance of DIAG with its proven theoretical bound. To do so, we find the root of the polynomial in \eqref{def_gamma_zero} for  $n=200$ and $\rho=(\kappa-1)/(\kappa+1)=116/118$ which is $\gamma_0=0.99983$. As we observe the convergence path of DIAG is very close to the proven  theoretical upper bound for this method.}

{Although $n$ iterations of DIAG and IAG and one iteration of GD have the same complexity in terms of the total number of gradient evaluations, DIAG and IAG require more elementary operations than GD due to averaging of the gradients. In many large scale machine learning applications the bottleneck is the computation of gradients; however, in the special case of quadratic programming problems, the additional elementary operations that IAG and DIAG require for computing the averages cannot be neglected. Therefore, to have a fair comparison between the incremental methods, i.e., IAG and DIAG, and the full-batch method, i.e., GD, we also compare these methods in terms of runtime as shown in Fig.~\ref{fig:time_quad} for the quadratic programming problem given by \eqref{eq_simulation_problem}. We observe in Fig.~\ref{fig:time_quad} that the performance of GD becomes better relative to the incremental methods. In particular, for the case of $n=200$ and $\kappa=10$, we observe that GD outperforms IAG and is marginally worse than DIAG. For the case of $n=200$ and $\kappa=117$, where $n$ is not significantly larger than $\kappa$, we observe that GD performs significantly better than IAG, while the convergence paths of GD and DIAG are close to each other. These observations lead to the conclusion that for quadratic programming problems, where the gradient evaluations are simply elementary operations, the cost of computing the averages in IAG and DIAG cannot be neglected.}

%
\begin{figure} [t]
\centering
\includegraphics[width=0.495\linewidth]{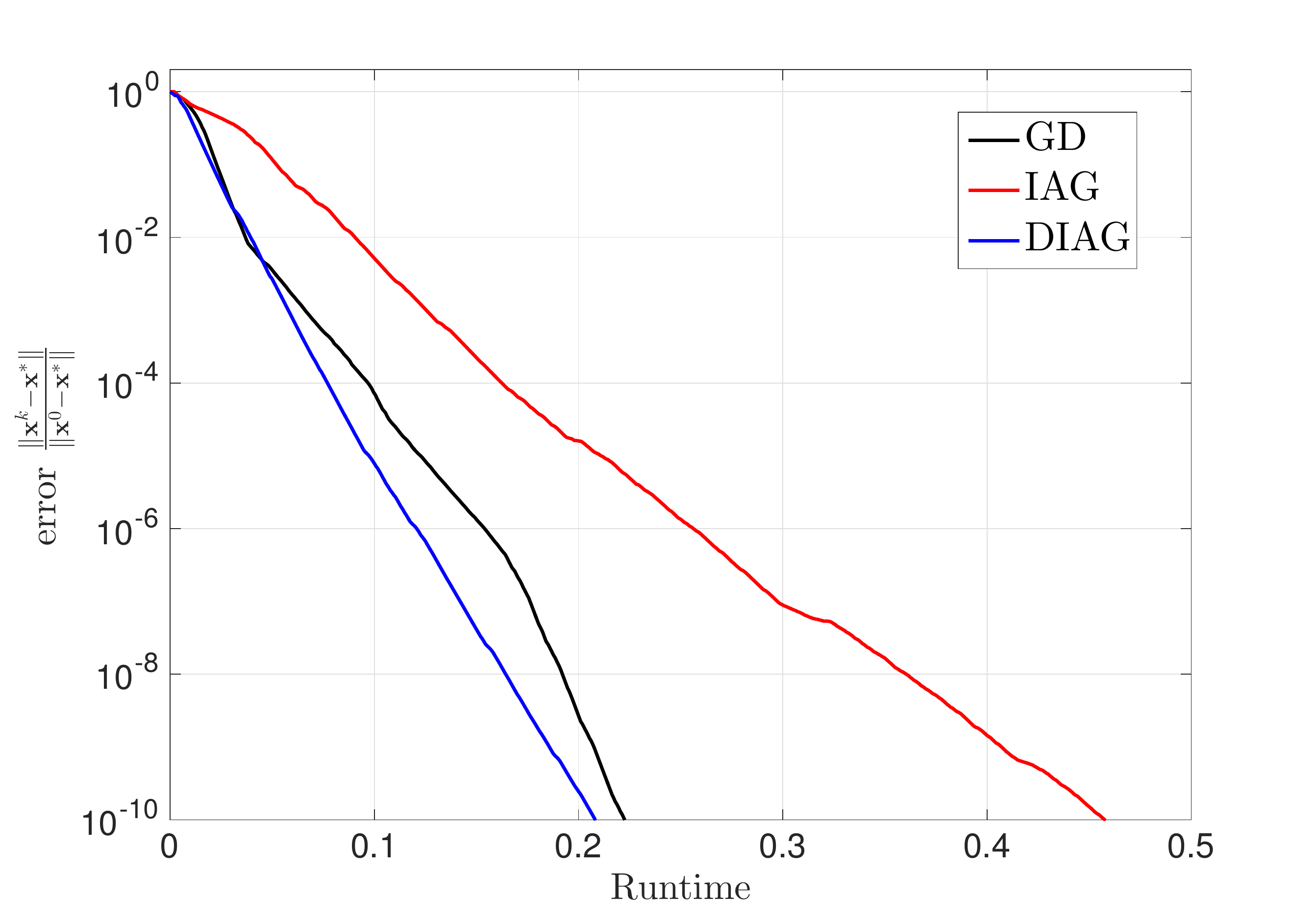}
\includegraphics[width=0.495\linewidth]{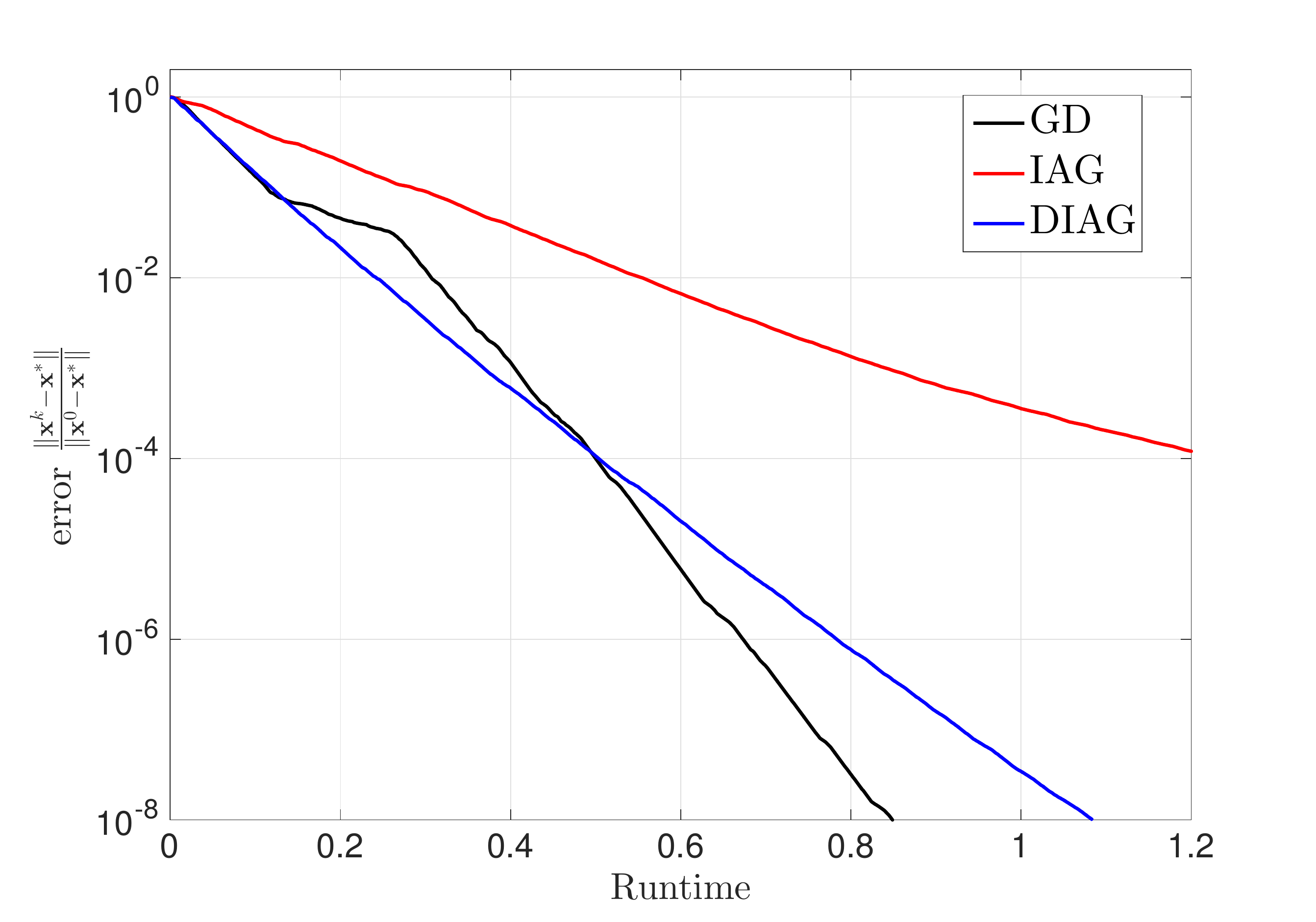}
\vspace{-3mm}
\caption{Relative error of GD, IAG, SAG, Finito, and DIAG versus number of gradient evaluations for the quadratic programming in \eqref{eq_simulation_problem} with $n=200$ and $\kappa=10$ (left) and $n=200$ and $\kappa=117$ (right).}
\vspace{-3mm}
\label{fig:time_quad} \end{figure}

%
\begin{figure} [t]
\centering
\includegraphics[width=0.495\linewidth]{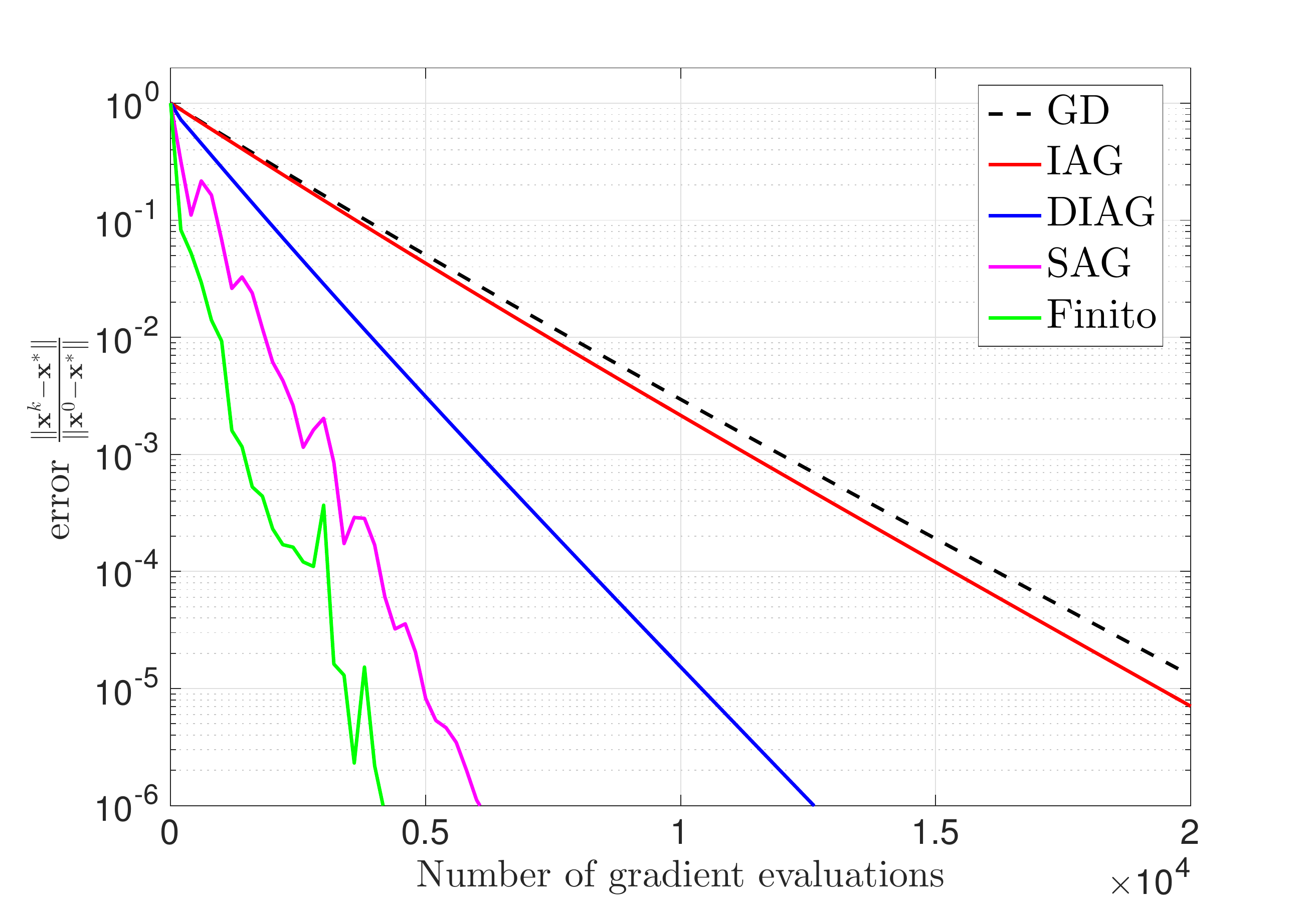}
\includegraphics[width=0.495\linewidth]{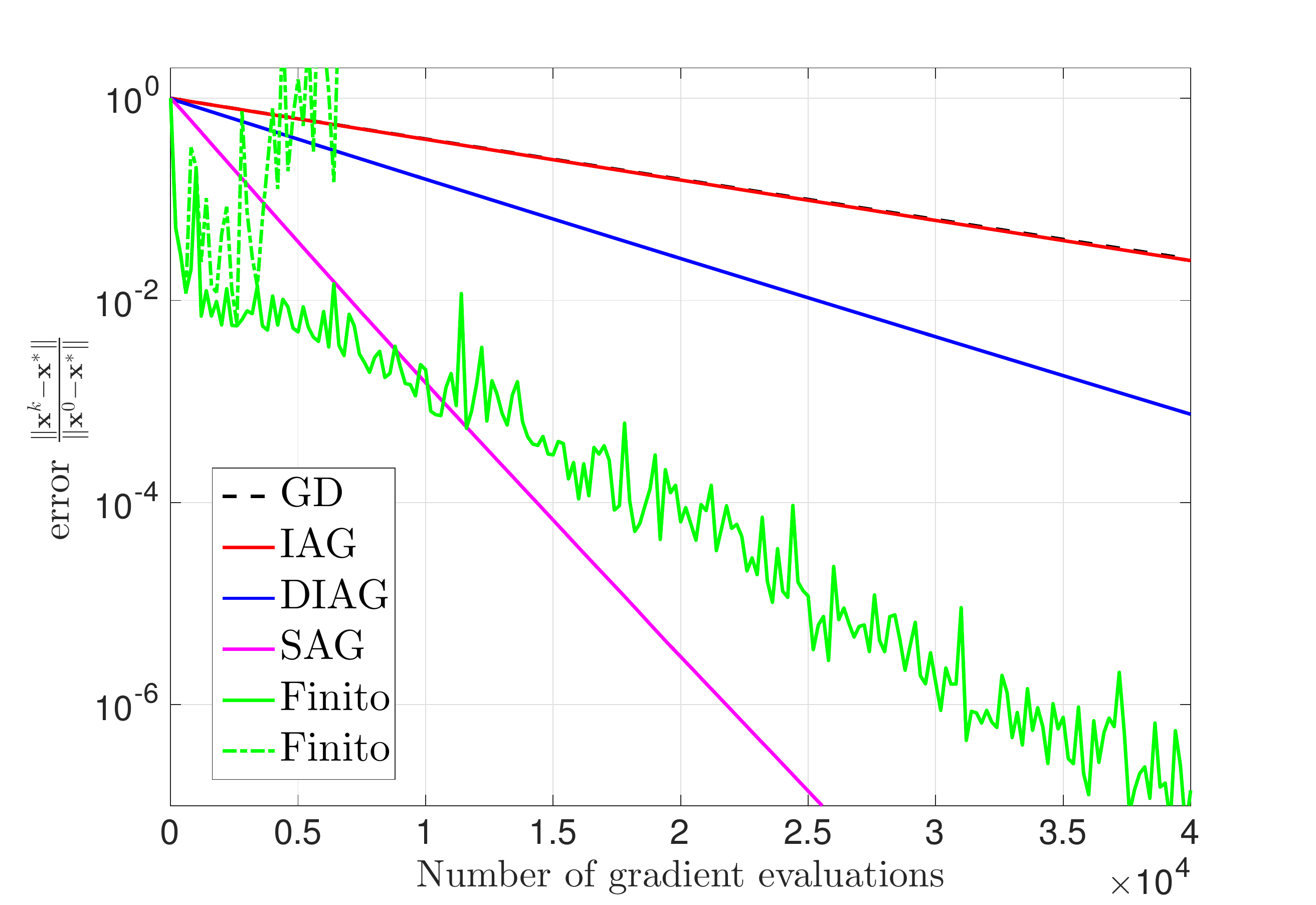}
\vspace{-3mm}
\caption{Relative error of GD, IAG, SAG, Finito, and DIAG versus number of gradient evaluations for the quadratic programming in \eqref{eq_simulation_problem} with $n=200$ and $\kappa=19$ (left) and $n=200$ and $\kappa=120$ (right).}
\label{fig:221} \end{figure}
%

{We also compare the proposed DIAG method with SAG \cite{schmidt2017minimizing,roux2012stochastic} and Finito/MISO \cite{defazio2014finito,mairal2015incremental} which are among the most successful stochastic incremental methods for solving the finite sum minimization problem in \eqref{org_prob}. For SAG we use the stepsize $1/16L$ as suggested in \cite{schmidt2017minimizing}, and for Finito algorithm we use the stepsize $1/2\mu$ as suggested in \cite{defazio2014finito}. The left plot in Fig.~\ref{fig:221}, which corresponds to the case that $n=200$ and $\kappa=19$, shows that the performance of SAG and Finito are better than the one for DIAG, while they fluctuate more comparing to IAG and DIAG. The gap between the performances of IAG and DIAG, and their stochastic variants SAG and Finito comes from the fact that convergence guarantees of SAG and Finito hold for larger choices of stepsize comparing to the ones for IAG and DIAG. But this improvement comes at the cost of moving from a deterministic convergence guarantee (for IAG and DIAG) to results that hold in expectation (for SAG and Finito) which might lead to volatile convergence paths as shown in the left plot in Fig.~\ref{fig:221}. 

The right plot in Fig.~\ref{fig:221} illustrates the convergence paths of GD, IAG, SAG, Finito, and DIAG for a problem with large condition number $\kappa=120$. We observe that SAG and Finito outperform DIAG; however, their convergence guarantees hold in expectation which is a much weaker notion of convergence compared to deterministic convergence guarantees. As an example, one realization of Finito in the right plot in Fig.~\ref{fig:221} performs pretty well, while the other one diverges. In contrast, the results for IAG and DIAG are deterministic, and for any realization of the iterates convergence to the optimal solution is guaranteed.}

\subsection{Logistic Regression minimization}
 
In this section, we compare the performance of GD, IAG, and DIAG in solving a binary classification problem. Consider the given training set $\mathcal{S}=\{\bbu_{i},l_i\}_{i=1}^{i=n}$ which contains $n$ realizations of the feature vectors $\bbu_i \in \reals^p$ and respective label $l_i$ where the labels are either $-1$ or $1$. The goal is to find the optimal classifier $\bbx^* \in \reals^p$ that minimizes the regularized logistic loss which is given by
\begin{equation}\label{logistic_problem}
\min_{\bbx\in \reals^p} f(\bbx):= \frac{1}{n} \sum_{i=1}^n \log(1+\exp(-l_i\bbx^T\bbu_i ))  + \frac{\lambda}{2}  \|\bbx\|^2,
\end{equation}
where the regularization term $({\lambda}/{2})  \|\bbx\|^2$ is added to avoid overfitting. The problem in \eqref{logistic_problem} is a particular case of the problem in \eqref{org_prob} when the function $f_i$ is defined as $f_i(\bbx)= \log(1+\exp(-l_i\bbx^T\bbu_i ))  +({\lambda}/{2})  \|\bbx\|^2$.

Note that the objective function $f$ in \eqref{logistic_problem} is strongly convex with the constant $\mu=\lambda $ and its gradients are Lipschitz continuous with the constant $L=\lambda+\zeta/4$ where $\zeta=\max_{i} \bbu_i^T\bbu_i$. It is easy to verify that the instantaneous functions $f_i$ are also strongly convex with constant $\mu=\lambda$, and their gradients are Lipschitz continuous with constant $L=\lambda+\zeta/4$. This observation shows that condition in Assumption \ref{cnvx_lip} hold for the logistic regression problem in \eqref{logistic_problem}. {In this experiment, we normalize the samples to set the parameter $\zeta=1$. Further, the regularization parameter is chosen as $\lambda=1/\sqrt{n}$.}

We apply GD, IAG, and DIAG to solve the logistic regression problem in \eqref{logistic_problem} for the MNIST dataset \cite{lecun1998mnist}. We only use the samples that correspond to digits $0$ and $8$ and assign label $l_i=1$ to the samples that correspond to digit $8$ and label $l_i=-1$ to those associated with digit $0$. We get a total of $n=11,774$ training examples, each of dimension $p=784$.  
The objective function error $f(\bbx^k)-f(\bbx^*)$ of the GD, IAG, and DIAG methods versus the number of passes over the dataset are shown in the left plot in Fig. \ref{fig:3}. We report the results for the stepsizes $\eps_{GD}=2/(\mu+L)$, $\eps_{IAG}=2/(nL)$, and $\eps_{DIAG}=2/(\mu+L)$ as in the quadratic programming. We observe that the proposed DIAG method outperforms GD and IAG. 

{As we discussed in the quadratic programming example, $n$ iterations of  DIAG or IAG require more elementary operations than a single iteration of GD. Hence, we also compare these methods in terms of runtime as shown in the right plot in Fig. \ref{fig:3}. Note that in this case, in contrast to the quadratic programming example, gradient evaluations are more costly than the elementary operations required in the update and therefore we expect to gain more by running incremental methods. Indeed, we observe in the right plot in Fig.~\ref{fig:3} that the DIAG method outperforms GD significantly in terms of runtime. However, the performance of IAG and GD are almost similar to the one for GD. Comparing these results with the quadratic programming example shows that in terms of runtime incremental methods are more preferable in cases that gradient evaluation is more costly than elementary operations.}

%
\begin{figure} [t]
\centering
\includegraphics[width=0.49\linewidth]{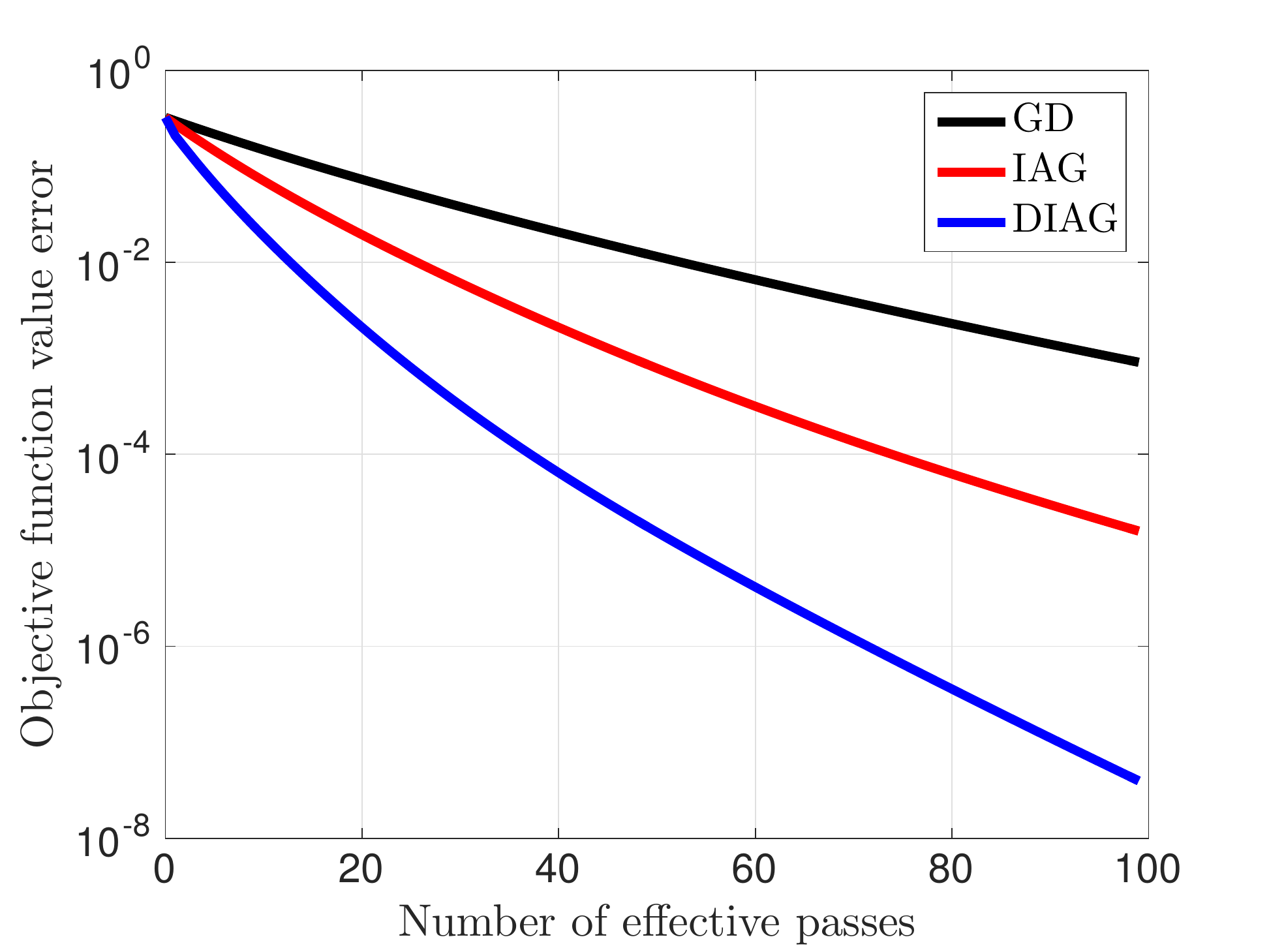}
\includegraphics[width=0.49\linewidth]{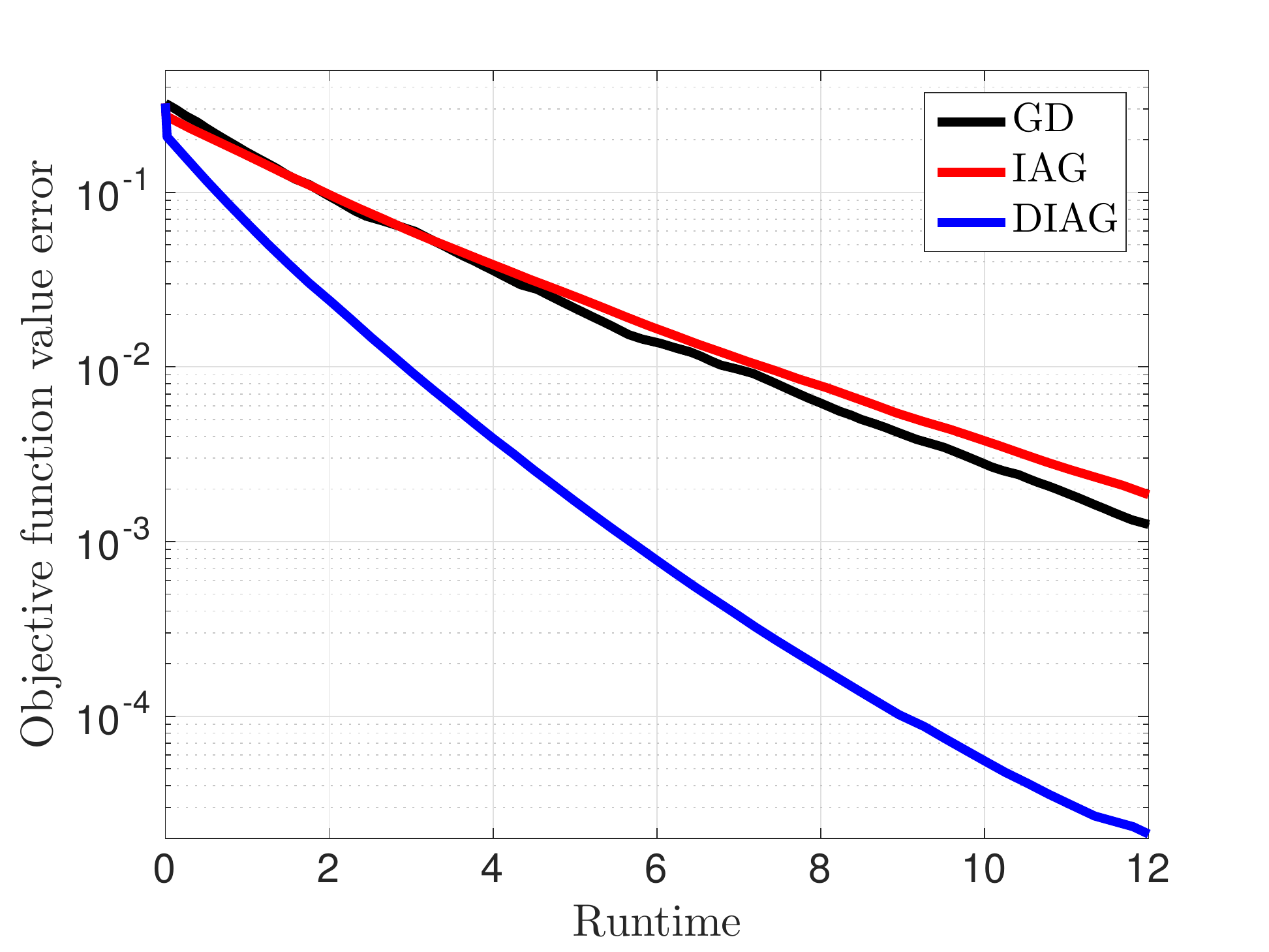}
\caption{Convergence paths of GD, IAG, and DIAG in terms of number of effective passes over the dataset (left) and runtime (right) for the binary classification application.}
\label{fig:3} \end{figure}

\section{Conclusion}\label{sec:conclusions} 
In this paper we proposed a novel incremental method for solving the average of a set of $n$ smooth and strongly convex functions. The proposed double incremental aggregated gradient method (DIAG) uses the aggregated average of both variables and gradients to update its iterate in oppose to classic cyclic incremental methods that only use the average of gradients. The convergence analysis of the DIAG method guarantees improvement with respect to the gradient descent (GD) method. This result makes DIAG the first cyclic incremental method that improves GD under all circumstances. Moreover, we showed that the sequence of iterates generated by DIAG is linearly convergent. Numerical experiments matched the theoretical results and showcased the advantage of DIAG relative to GD and the classic incremental aggregated gradient method (IAG).  

As a future research direction, we aim to extend the double incremental idea to the accelerated gradient descent method (AGD) to obtain an incremental method that surpasses the optimal AGD method under any circumstances.

\appendix

\section{Proof of Lemma \ref{fundamental_lemma}}\label{apx_fundamental_lemma}

Consider the update in \eqref{incremental_qn_update}. Subtract the optimal argument $\bbx^*$ from both sides of the equality to obtain
\begin{align}\label{basic_lemma_proof_100}
\bbx^{k+1}-\bbx^*
&=		 \frac{1}{n} \sum_{i=1}^n(\bby_i^k-\bbx^*) - \frac{\eps}{n} \sum_{i=1}^n \nabla f_i (\bby_i^k) .
\end{align}
Note that the global objective function gradient at the optimal point is null, i.e., $(1/n)\sum_{i=1}^n \nabla f_i (\bbx^*) =\bb0$. This observation in conjunction with the expression in \eqref{basic_lemma_proof_100} leads to 
\begin{align}\label{basic_lemma_proof_200}
\bbx^{k+1}-\bbx^*
&=
\frac{1}{n} \sum_{i=1}^n\left(\bby_i^k-\bbx^*\right) - \frac{\eps}{n} \sum_{i=1}^n \left(\nabla f_i (\bby_i^k)-\nabla f_i (\bbx^*)\right) 
\nonumber\\
 &=
\frac{1}{n} \sum_{i=1}^n\Big[ \bby_i^k-\bbx^* -\eps \left(\nabla f_i (\bby_i^k)-\nabla f_i (\bbx^*)\right) \Big].
\end{align}
Compute the norm of both sides in \eqref{basic_lemma_proof_200}, and use the Cauchy-Schwarz inequality to obtain
\begin{align}\label{proof120}
\|\bbx^{k+1}-\bbx^*\|
\leq \frac{1}{n}\sum_{i=1}^n \left\| \bby_i^k-\bbx^* -\eps \left(\nabla f_i (\bby_i^k)-\nabla f_i (\bbx^*)\right)   \right\|.
\end{align}
Now we proceed to derive an upper bound for each summand in \eqref{proof120}. It can be shown that $\nabla f_i (\bby_i^k)-\nabla f_i (\bbx^*) = \nabla^2 f_i (\bbu_i^k)(\bby_i^k-\bbx^*)$ where $\bbu_i^k$ is a convex combination of $\bby_i^k$ and $\bbx^*$. Therefore, 
\begin{align}\label{proof12111}
\left\| \bby_i^k-\bbx^* -\eps \left(\nabla f_i (\bby_i^k)-\nabla f_i (\bbx^*)\right)   \right\|
 =\left\| \left(\bbI -\eps \nabla^2 f_i (\bbu_i^k)\right)(\bby_i^k-\bbx^*)   \right\| .
\end{align}
Since the functions $f_i$ are $\mu$-strongly convex and their gradients are $L$-Lipschitz continuous we can show that 
\begin{align}\label{proof12sda0}
\left\| \bby_i^k-\bbx^* -\eps \left(\nabla f_i (\bby_i^k)-\nabla f_i (\bbx^*)\right)   \right\| \leq \max\{|1-\eps\mu|,|1-\eps L|\} \| \bby_i^k-\bbx^*\|.
\end{align}
By setting the stepsize $\eps$ in \eqref{proof12sda0} as $\eps=2/(\mu+L)$, we can write
\begin{align}\label{proof12sda0uihubh}
\left\| \bby_i^k-\bbx^* -\eps \left(\nabla f_i (\bby_i^k)-\nabla f_i (\bbx^*)\right)   \right\| \leq \frac{\kappa-1}{\kappa+1}\ \| \bby_i^k-\bbx^*\|,
\end{align}
where $\kappa=L/\mu$ is the function $f_i$ condition number. By replacing the summands in the right hand side of \eqref{proof120} with their upper bounds $ (({\kappa-1})/({\kappa+1}))\| \bby_i^k-\bbx^*\|$, as shown in \eqref{proof12sda0uihubh}, we can show that the residual $\|\bbx^{k+1}-\bbx^*\|$ is bounded above as
\begin{align}\label{proof140}
\|\bbx^{k+1}-\bbx^*\|
 \leq\left( \frac{\kappa-1}{\kappa+1}\right) \sum_{i=1}^n \frac{\| \bby_i^k-\bbx^*\|}{n}\end{align}
 Note that in the DIAG method we use a cyclic scheme to update the variables. Thus, the set of variables $\{\bby_1^k,\dots,\bby_n^k\}$ is identical to the set of the last $n $ iterates before the iterate $\bbx^{k+1}$ which is given by $\{\bbx^k,\dots, \bbx^{k-n+1}\}$. Thus, we can replace the sum in $\sum_{i=1}^n \| \bby_i^k-\bbx^*\|$ in \eqref{proof140} by the sum $\sum_{i=1}^n \| \bbx^{k-i+1}-\bbx^*\|$ and the claim in \eqref{fundamental_lemma_claim} follows.

\section{Proof of Proposition \ref{CIAG_better_than_GD_lemma}}\label{apx_CIAG_better_than_GD_lemma}

Consider the definition of the constant $\rho:=(\kappa-1)/(\kappa+1)$ where $\kappa=L/\mu$ is the objective function condition number. Thus, if all the copies $\bby_i$ are initialized at $\bbx^0$, the result in Lemma \ref{fundamental_lemma} implies that
\begin{equation}\label{proof_rate_100}
\|\bbx^{1}-\bbx^*\|  \ \leq\  \rho\ \|\bbx^{0}-\bbx^*\|.
\end{equation}
We can use the same inequality for the second iterate to obtain
\begin{align}\label{proof_rate_200}
\|\bbx^{2}-\bbx^*\|  \leq\  \frac{\rho}{n}\ \|\bbx^{1}-\bbx^*\| + \frac{\rho(n-1)}{n}  \|\bbx^{0}-\bbx^*\| \end{align}
Replace $\|\bbx^{1}-\bbx^*\| $ in \eqref{proof_rate_200} by its upper bound in \eqref{proof_rate_100} and regroup the terms to obtain 
\begin{align}\label{proof_rate_201}
\|\bbx^{2}-\bbx^*\| \ 
& \leq\  \frac{\rho^2}{n}\ \|\bbx^{0}-\bbx^*\| + \frac{\rho(n-1)}{n}  \|\bbx^{0}-\bbx^*\| \nonumber\\
& =\  \rho \left[ 1- \frac{1-\rho}{n} \right] \|\bbx^{0}-\bbx^*\|.
\end{align}
Repeat the same process for the third residual $\|\bbx^{3}-\bbx^*\| $ to obtain
\begin{align}\label{proof_rate_300}
\|\bbx^{3}-\bbx^*\| \ & \leq\  \frac{\rho}{n}\ \|\bbx^{2}-\bbx^*\|+\frac{\rho}{n}\ \|\bbx^{1}-\bbx^*\| + \frac{\rho(n-2)}{n}  \|\bbx^{0}-\bbx^*\| \nonumber\\
& \leq\     \rho \left[1-\frac{2(1-\rho)}{n}-\frac{\rho(1-\rho)}{n^2}\right]\|\bbx^{0}-\bbx^*\| ,
\end{align}
where in the second inequality we use the bounds in \eqref{proof_rate_100} and \eqref{proof_rate_200}. Since the term $-\rho(1-\rho)/n^2$ is negative we can drop this term and show that the residual $\|\bbx^{3}-\bbx^*\| $ is upper bounded by 
\begin{align}\label{proof_rate_310}
\|\bbx^{3}-\bbx^*\| \  \leq\   \rho \left[1-\frac{2(1-\rho)}{n}\right]\|\bbx^{0}-\bbx^*\| .
\end{align}
By following the same logic we can show that for the first $n$ residuals $\{\|\bbx^k-\bbx^*\|\}_{k=1}^n$ the following inequality holds
\begin{align}\label{proof_rate_500}
\|\bbx^{k}-\bbx^*\| &\leq  \rho \left[1-\frac{(k-1)(1-\rho)}{n}\right]\|\bbx^{0}-\bbx^*\| ,\quad \for \ k=1,\dots,n.
\end{align}
Thus, the result in \eqref{eq_lin_result_1} holds.

Now we proceed to show that the result in  \eqref{eq_lin_result_2} hold for $k=n+1,\dots,2n$. According to the result in Lemma \ref{fundamental_lemma} we can write
\begin{align}\label{proof_new_result_100}
\|\bbx^{n+1}-\bbx^*\|   \leq\  \frac{\rho}{n}\ \|\bbx^{n}-\bbx^*\|+\dots+ \frac{\rho}{n}  \|\bbx^{1}-\bbx^*\| 
\end{align}
By replacing each summand in the right hand side of \eqref{proof_new_result_100} by its upper bound in \eqref{proof_rate_500} we obtain 
\begin{align}\label{proof_new_result_200}
\|\bbx^{n+1}-\bbx^*\|  & \leq\  \frac{\rho}{n}\ 
\left[ \sum_{i=1}^n \rho \left[1-\frac{(i-1)(1-\rho)}{n}\right]\right]\|\bbx^{0}-\bbx^*\|  
\nonumber\\
&
={\rho^2}{}\ 
\left[ 1-\frac{(1-\rho)(n-1)}{2n}  \right]\|\bbx^{0}-\bbx^*\| .
\end{align}
It follows from the result in \eqref{proof_new_result_200} that the residual $\|\bbx^{n+1}-\bbx^*\| $ is upper bounded by 
\begin{align}\label{proof_new_result_300}
\|\bbx^{n+1}-\bbx^*\|  & \leq\
{\rho^2}{} 
\left[ 1-\frac{1-\rho}{n} \times \min\left\{1,\frac{(n-1)}{2n} \right\} \right]\|\bbx^{0}-\bbx^*\| .
\end{align}
Therefore, the result in \eqref{eq_lin_result_2} holds for $k=n+1$. To prove the claim for $k=n+2$, first note that based on the upper bounds in \eqref{proof_rate_500} we can show that 
\begin{align}\label{proof_new_result_310}
\|\bbx^{k}-\bbx^*\| &\leq  \rho \left[1-\frac{(1-\rho)}{n}\right]\|\bbx^{0}-\bbx^*\| ,\quad \for \ k=2,\dots,n.
\end{align}
Considering the following inequality 
\begin{align}\label{proof_new_result_320}
  \left[ 1- \frac{1-\rho}{n} \right] \leq
  \left[ 1-\frac{1-\rho}{n} \times \min\left\{1,\frac{(n-1)}{2n} \right\} \right], 
\end{align}
and the upper bounds in \eqref{proof_new_result_310} we obtain that 
\begin{align}\label{proof_new_result_330}
\|\bbx^{k}-\bbx^*\| 
\leq 
{\rho} \left[ 1-\frac{1-\rho}{n} \times \min\left\{1,\frac{(n-1)}{2n} \right\} \right]\|\bbx^{0}-\bbx^*\|, \quad \for \ k=2,\dots,n.
\end{align}
In addition, the result in \eqref{proof_new_result_300} and the fact that $\rho<1$ imply that the term $\|\bbx^{n+1}-\bbx^*\| $ is also can be upper bounded by
\begin{align}\label{proof_new_result_340}
\|\bbx^{n+1}-\bbx^*\|  & \leq\
{\rho}{} 
\left[ 1-\frac{1-\rho}{n} \times \min\left\{1,\frac{(n-1)}{2n} \right\} \right]\|\bbx^{0}-\bbx^*\|.
\end{align}
Now, based on the result in Lemma \ref{fundamental_lemma}, we can write
\begin{align}\label{proof_new_result_350}
\|\bbx^{n+2}-\bbx^*\| \leq 
\frac{\rho}{n}\left[ \|\bbx^{n+1}-\bbx^*\|+\dots+ \|\bbx^{2}-\bbx^*\|\right]  .
\end{align}
The inequalities in \eqref{proof_new_result_330} and \eqref{proof_new_result_340} show that all the summands in \eqref{proof_new_result_350} are bounded by the same upper bound. Replace these term by the upper bound to obtain
\begin{align}\label{proof_new_result_800}
\|\bbx^{n+2}-\bbx^*\| 
\leq 
{\rho^2} \left[ 1-\frac{1-\rho}{n} \times \min\left\{1,\frac{(n-1)}{2n} \right\} \right]\|\bbx^{0}-\bbx^*\|, 
\end{align}
and the claim in \eqref{eq_lin_result_2} for $k=n+2$ follows. 

Since the constant $\rho$ is strictly less than 1, we can replace the upper bound in \eqref{proof_new_result_800} by 
\begin{align}\label{proof_new_result_900}
\|\bbx^{n+2}-\bbx^*\| 
\leq 
{\rho} \left[ 1-\frac{1-\rho}{n} \times \min\left\{1,\frac{(n-1)}{2n} \right\} \right]\|\bbx^{0}-\bbx^*\|.
\end{align}
Using the same argument, we can say that the upper bound in \eqref{proof_new_result_900} holds for $k=n+2,\dots,3$ and apply the result in Lemma \ref{fundamental_lemma} to show that the distance $\|\bbx^{n+3}-\bbx^*\| $ is bounded above by 
\begin{align}\label{proof_new_result_1000}
\|\bbx^{n+3}-\bbx^*\| 
\leq 
{\rho}^2 \left[ 1-\frac{1-\rho}{n} \times \min\left\{1,\frac{(n-1)}{2n} \right\} \right]\|\bbx^{0}-\bbx^*\|, 
\end{align}
which yields the claim in \eqref{eq_lin_result_2} for $k=n+3$. By repeating the steps in \eqref{proof_new_result_900} and \eqref{proof_new_result_1000} we can conclude that the result in \eqref{eq_lin_result_2} holds for $k=n+1,\dots,2n$.

The proof for steps $k>2n$ is similar to the argument used for the steps $k=n+1,\dots,2n$, although we write it in a formal manner by using induction. 

 Assume that for for $k=nj+1,\dots,nj+n$ the following inequality holds
\begin{align}\label{proof_rate_501}
\|\bbx^{k}-\bbx^*\| \leq  \rho^{\lfloor \frac{k-1}{n}\rfloor +1} \left[1-\frac{(1-\rho)}{n} \times \min\left\{1,\frac{n-1}{2}\right\}\right]\|\bbx^{0}-\bbx^*\| .
\end{align}
We intend to prove the same inequalities hold for $k=n({j+1})+1, \dots, n({j+1})+n$. Note that the result in \eqref{proof_rate_501} is satisfied for $j=1$, which corresponds to the iterates $k=n+1,\dots, 2n$, and the base of induction holds.

As we assume that the result in \eqref{proof_rate_501} holds for $k=nj+1,\dots,nj+n$, we obtain that 
\begin{align}\label{proof_rate_502}
\|\bbx^{k}-\bbx^*\| \leq  \rho^{j+1} \left[1-\frac{(1-\rho)}{n} \times \min\left\{1,\frac{n-1}{2}\right\}\right]\|\bbx^{0}-\bbx^*\| ,
\end{align}
for $k=nj+1,\dots,nj+n$. According to Lemma \ref{fundamental_lemma}, the residual $\|\bbx^{n(j+1)+1}-\bbx^*\|$ is bounded above by 
\begin{align}\label{proof_rate_503}
\|\bbx^{n(j+1)+1}-\bbx^*\|   \leq\  \frac{\rho}{n}\ \|\bbx^{nj+1}-\bbx^*\|+\dots+ \frac{\rho}{n}  \|\bbx^{nj+n}-\bbx^*\|.
\end{align}
Replacing the summands in the right hand side of \eqref{proof_rate_503} by their upper bound in \eqref{proof_rate_502} implies that 
\begin{align}\label{proof_rate_504}
\|\bbx^{n(j+1)+1}-\bbx^*\|   \leq\  \rho^{j+2} \left[1-\frac{(1-\rho)}{n} \times \min\left\{1,\frac{n-1}{2}\right\}\right]\|\bbx^{0}-\bbx^*\|, 
\end{align}
which yields the claim in \eqref{proof_rate_501} for $k=n({j+1})+1$. 

Now use the result in \eqref{proof_rate_504} and the inequality $\rho<1$ to obtain 
\begin{align}\label{proof_rate_505}
\|\bbx^{n(j+1)+1}-\bbx^*\|   \leq\  \rho^{j+1} \left[1-\frac{(1-\rho)}{n} \times \min\left\{1,\frac{n-1}{2}\right\}\right]\|\bbx^{0}-\bbx^*\|, 
\end{align}
Proceed by writing the result in Lemma \ref{fundamental_lemma} for $k=n(j+1)+2$ to obtain 
\begin{align}\label{proof_rate_506}
\|\bbx^{n(j+1)+2}-\bbx^*\|   \leq\  \frac{\rho}{n}\ \|\bbx^{nj+2}-\bbx^*\|+\dots+ \frac{\rho}{n}  \|\bbx^{nj+n+1}-\bbx^*\|.
\end{align}
Replace the summands in the right hand side of \eqref{proof_rate_506} by their upper bounds in \eqref{proof_rate_502} and \eqref{proof_rate_505}, which are the same upper bounds, to obtain 
\begin{align}\label{proof_rate_507}
\|\bbx^{n(j+1)+2}-\bbx^*\|   \leq\  \rho^{j+2} \left[1-\frac{(1-\rho)}{n} \times \min\left\{1,\frac{n-1}{2}\right\}\right]\|\bbx^{0}-\bbx^*\|, 
\end{align}
and the claim in \eqref{proof_rate_501} for $k=n({j+1})+2$ follows. By repeating the steps from \eqref{proof_rate_505} to \eqref{proof_rate_507} we can show that the same result holds for $k=k=n({j+1})+3,\dots,k=n({j+1})+n$. Thus, the inequality in \eqref{proof_rate_501} holds for $k=n({j+1})+1,\dots,n({j+1})+n$. The induction is complete which implies that the claim in \eqref{eq_lin_result_2} holds.

\section{Proof of Theorem \ref{thm_lin}}\label{apx_linear_convg_lemma}

Considering the result in \eqref{eq_lin_result_1} and the definition of the constant $a$ in \eqref{cond1} we obtain that  
\begin{align}\label{proof_lin_cnvg_200}
\|\bbx^{k}-\bbx^*\| &\leq  a \gamma^k\|\bbx^{0}-\bbx^*\| , \qquad \for \quad k=1,\dots,n.
\end{align}
Thus, the inequality in \eqref{lin_IAG} holds for steps $k=1,\dots,n$.

Now we proceed to show that the claim in  \eqref{lin_IAG} also holds for $k>n$. To do so, we use an induction argument. Let's assume we aim to show that the inequality in \eqref{lin_IAG} holds for $k=j$, while it holds for the last $n$ iterates $k=j-1,\dots, j-n$. According to the result in Lemma \ref{fundamental_lemma} we can write
\begin{align}\label{proof_lin_cnvg_300}
\|\bbx^{j}-\bbx^*\|\leq \rho \left[ \frac{\| \bbx^{j-1}-\bbx^*\|+\dots +\| \bbx^{j-n}-\bbx^*\|}{n}\right],
\end{align}
where $\rho=(\kappa-1)/(\kappa+1)$. Based on the induction assumption, for steps $k=j-1,\dots, j-n$, the result in \eqref{lin_IAG} holds. Thus, we can replace the terms in the right hand side of \eqref{proof_lin_cnvg_300} by the upper bounds from  \eqref{lin_IAG}. This substitution implies 
\begin{align}\label{proof_lin_cnvg_400}
\|\bbx^{j}-\bbx^*\| &\leq \frac{\rho a}{n} \left[\gamma^{j-1}+\dots+\gamma^{j-n} \right] \|\bbx^0-\bbx^*\|
\nonumber\\ &
=\frac{\rho a\gamma^{j-n}(1- \gamma^{n})}{n(1-\gamma)} \|\bbx^0-\bbx^*\|
\end{align}
Rearranging the terms in \eqref{cond2} allows us to show that $(\rho(1-\gamma^n))/(n(1-\gamma))$ is bounded above by $\gamma^n$. This is true since
\begin{align}\label{proof_lin_cnvg_500}
 \gamma^{n+1}-\left(1+\frac{\rho}{n}\right)\gamma^n +\frac{\rho}{n}\leq 0 
 &\iff
\frac{\rho}{n}(1-\gamma^n)-\gamma^n(1-\gamma)\leq 0\nonumber\\
&\iff \frac{\rho(1-\gamma^n)}{n(1-\gamma)}\leq \gamma^n.
\end{align}
Therefore, we can replace the term $(\rho(1-\gamma^n))/(n(1-\gamma))$ in \eqref{proof_lin_cnvg_400} by its upper bound $\gamma^n$ to obtain
\begin{align}\label{proof_lin_cnvg_600}
\|\bbx^{j}-\bbx^*\| &\leq
a\gamma^{j} \|\bbx^0-\bbx^*\|.
\end{align}
The result in \eqref{proof_lin_cnvg_600} completes the proof. Thus, by induction the claim in  \eqref{lin_IAG}  holds for all $k\geq 1$ if the conditions in \eqref{cond1} and \eqref{cond2} satisfied.

\section{Proof of Proposition \ref{prop_non_empty_set}}\label{apx_prop_non_empty_set}

To prove the claim in Proposition \ref{prop_non_empty_set} we first derive the following lemma.

\begin{lemma}\label{lemma_aux}
For all $n\geq 1$ and $0\leq \phi\leq 1$ we have
\begin{align}\label{eq_aux_lemma}
 \left(1-\frac{\phi}{n} \right)^n\leq  \left(1-\frac{\phi}{n+1} \right)^{n+1}
\end{align}
\end{lemma}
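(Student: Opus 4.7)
The plan is to prove the monotonicity inequality in \eqref{eq_aux_lemma} by a direct application of the AM-GM inequality on a carefully chosen list of $n+1$ non-negative numbers. This is cleaner than a calculus-based argument (differentiating $n \log(1-\phi/n)$ with respect to $n$) and avoids having to analyze the sign of derivatives, so I would go with AM-GM.

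First, I would dispose of the edge cases. When $\phi = 0$, both sides equal $1$ and the inequality is trivial. When $\phi/n > 0$ but still $\phi/n \leq 1$ (which holds automatically since $0 \leq \phi \leq 1$ and $n \geq 1$), the quantity $1 - \phi/n$ is non-negative, so the AM-GM inequality applies. The boundary case $\phi = n = 1$ reduces to $0 \leq (1/2)^2$, which is trivial.

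Next, I would consider the $n+1$ non-negative numbers consisting of $n$ copies of $(1-\phi/n)$ together with a single copy of $1$. Their arithmetic mean is
\begin{equation*}
\frac{n(1-\phi/n) + 1}{n+1} \;=\; \frac{n+1-\phi}{n+1} \;=\; 1 - \frac{\phi}{n+1},
\end{equation*}
while their geometric mean is $(1-\phi/n)^{n/(n+1)}$. Applying AM-GM and raising both sides to the $(n+1)$-th power gives exactly the claim
\begin{equation*}
\left(1-\frac{\phi}{n}\right)^{n} \leq \left(1-\frac{\phi}{n+1}\right)^{n+1}.
\end{equation*}

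I do not anticipate any genuine obstacle: the entire argument is a one-line application of AM-GM once the right $n+1$ numbers are chosen. The only mild subtlety is checking non-negativity of $1-\phi/n$ under the hypotheses $0\leq \phi \leq 1$ and $n\geq 1$, which is immediate. Once \eqref{eq_aux_lemma} is established, the remainder of the proof of Proposition~\ref{prop_non_empty_set} will presumably use it to exhibit a concrete feasible pair $\{a,\gamma\}$ (for example, by showing that some $\gamma$ of the form $(1 - c/n)$ with appropriate $c$ satisfies \eqref{cond2} by comparing with the limiting case $e^{-c}$).
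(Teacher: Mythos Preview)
Your AM--GM argument is correct and is genuinely different from the paper's proof. The paper instead considers $h(x)=(1-\phi/x)^x$ as a function of a continuous variable $x>1$, takes the logarithmic derivative, and shows
\[
\frac{h'(x)}{h(x)} = \ln\!\left(1-\frac{\phi}{x}\right) + \frac{\phi/x}{1-\phi/x} > 0
\]
by checking that $g(u)=\ln(1-u)+u/(1-u)$ is increasing on $(0,1)$ with $g(0)=0$; the case $n=1$ is then verified separately by hand. Your approach is cleaner: it is purely algebraic, handles all integers $n\geq 1$ uniformly, and avoids having to argue the sign of $g(u)$. The calculus route, on the other hand, gives the slightly stronger statement that $h$ is monotone in the continuous parameter $x$, though that extra strength is not used anywhere. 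Regarding your closing speculation, the lemma is applied with $\phi=1-\rho$ to obtain $\rho\leq(1-(1-\rho)/n)^n$, which is exactly the inequality needed to verify that $\gamma=\rho^{1/n}$ satisfies \eqref{cond2}.
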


\begin{proof}
Consider the function $ h(x)=(1-({\phi}/{x}))^x$  for $x>1$. The natural logarithm of the function $h(x)$ is given by $
\ln \left(h(x)\right)= x\ln(1-({\phi}/{x})) $. Compute the derivative of both sides with respect to $x$ to obtain
\begin{equation}
\frac{dh}{dx} \times \frac{1}{h(x)}= \ln\left(1-\frac{\phi}{x}\right)+ x\times \frac{\frac{\phi}{x^2}}{1-\frac{\phi}{x}}
\end{equation}
By multiplying both sides by $h(x)$, replacing $h(x)$ by the expression $\left(1-\frac{\phi}{x} \right)^x$, and simplifying the terms we obtain that the derivative of the function $h(x)$ is given by 
\begin{equation}\label{papa}
\frac{dh}{dx}
 =\left(1-\frac{\phi}{x} \right)^x \left[\ln\left(1-\frac{\phi}{x}\right)
 	+ \frac{\frac{\phi}{x}}{1-\frac{\phi}{x}}\right].
\end{equation}
Note that the sum $\ln(1-u)+u/(1-u)$ is always positive for $0<u<1$. By setting $u:=\phi/x$, we can conclude that the term in the right hand side of \eqref{papa} is positive for $x>1$. Therefore, the derivative $dh/dx$ is always positive for $x>1$. Thus, the function $h(x)$ is an increasing function for $x>1$ and we can write 
\begin{align}
 \left(1-\frac{\phi}{n} \right)^n\leq  \left(1-\frac{\phi}{n+1} \right)^{n+1},
\end{align}
for $0\leq\phi\leq1$ and $n>1$.
It remains to show that the same claim is also valid for $n=1$ which is equivalent to the inequality 
\begin{align}\label{hamin}
1-\phi \leq  \left(1-\frac{\phi}{2} \right)^{2}.
\end{align}
It is trivial to show \eqref{hamin} holds, and, therefore, the claim in \eqref{eq_aux_lemma} holds for all $n\geq1$.
\end{proof}

Now proceed to prove the claim in Proposition \ref{prop_non_empty_set} using the result in Lemma \ref{lemma_aux}. To prove that the feasible set of the condition in \eqref{cond2} is non-empty we show that $\gamma={\rho}^{1/n}$ satisfies the inequality in \eqref{cond2}. In other words, 
\begin{equation}\label{eq_proof_non_empty_100}
\rho^{\frac{n+1}{n}}-\left(1+\frac{\rho}{n}\right)\rho+\frac{\rho}{n}\leq 0
\end{equation}
Divide both sides of \eqref{eq_proof_non_empty_100} by $\rho$ and regroupe the terms to obtain the following ineqaulity
\begin{equation}\label{eq_proof_non_empty_200}
\rho\leq  \left(1-\frac{1-\rho}{n} \right)^n,
\end{equation}
which is equivalent to \eqref{eq_proof_non_empty_100}. In other words, the inequality in \eqref{eq_proof_non_empty_200} is a necessary and sufficient condition for the condition in  \eqref{eq_proof_non_empty_100}. 

Recall the result in Lemma \ref{lemma_aux}. By setting $\phi=1-\rho$ we obtain that 
\begin{equation}\label{eq_proof_non_empty_300}
\rho = \left(1-\frac{1-\rho}{1} \right)^1\leq  \left(1-\frac{1-\rho}{2} \right)^2 \leq \dots \leq  \left(1-\frac{1-\rho}{n} \right)^{n},  
\end{equation}
for $n\geq1$. Thus, the inequality in \eqref{eq_proof_non_empty_200} holds, and, consequently, the inequality in \eqref{eq_proof_non_empty_100} is valid. Therefore, $\gamma={\rho}^{1/n}$ satisfies the inequality in \eqref{cond2}.

Then, we can define $a$ as the smallest constant that satisfies \eqref{cond1} for the choice $\gamma={\rho}^{1/n}$, which is given by 
\begin{align}\label{eq_proof_non_empty_400}
a= \max_{k=1,\dots,n} \left(1-\frac{(k-1)(1-\rho)}{n}\right) \rho^{1-\frac{k}{n}}.
\end{align}
Therefore, $\gamma={\rho}^{1/n}$ and the constant $a$ in \eqref{eq_proof_non_empty_400} satisfy the conditions in \eqref{cond1} and \eqref{cond2}, and the claim in Proposition \ref{prop_non_empty_set} follows.

\section{Proof of Theorem \ref{theorem-rate-bounds}}\label{apx_theorem-rate-bounds}

\begin{itemize}
	\item [$(i)$] This follows directly from  the Perron-Frobenius theorem for irreducible non-negative matrices \cite[Theorem 8.4.4]{HorJoh91}. 
	\item [$(ii)$] By \cite[Theorem 8.5.1]{HorJoh91}, we also have
  \begin{equation}  
   	\lim_{k \to \infty} \frac{\bbM_\rho^k}{\lambda^*(\rho)^k} = \bbu\bbv^T 
   	\label{eigen-vec-limit}
   \end{equation} 
where $\bbu$, $\bbv$ are the right and left eigenvectors of $\bbM_\rho$ corresponding to the eigenvalue $\lambda^*(\rho)$ normalized to satisfy $\bbv^T \bbu = 1$. Note also that 
 \begin{equation}
d^{k} = \bbe_1^T \bbd^{k+1} = \bbe_1^T \bbM_\rho^{k-n+1} \bbd^n
\end{equation}
Therefore, 
\begin{align}
\lim_{k\to\infty} d^{k+1}/d^k = \lim_{k\to\infty} \frac{\bbe_1^T \bbM_\rho^{k-n+2} \bbd^n}{\bbe_1^T \bbM_\rho^{k-n+1} \bbd^n} &= \lim_{k\to\infty}  \frac{\bbe_1^T \bbM_\rho^{k-n+1} \bbd^n / {\lambda^*(\rho)}^{k-n+1} }{\bbe_1^T \bbM_\rho^{k-n} \bbd^n / {\lambda^*(\rho)}^{k-n+1} } 
\end{align}
where in the last inequality we divided both numerator and denominator with the same factor $\lambda^*(\rho)^{k-n+1}$. Simplifying and using \eqref{eigen-vec-limit}, we obtain
 \begin{align}
\lim_{k\to\infty} d^{k+1}/d^k =  \lim_{k\to\infty} \lambda^*(\rho) \frac{\bbe_1^T \bbu\bbv^T \bbd^n }{\bbe_1^T \bbu\bbv^T \bbd^n } =  \lambda^*(\rho).
\end{align}
	\item [$(iii)$] A direct consequence of [Theorem 8.1.22]\cite{HorJoh91} is that
			$\lambda^*(\rho) \geq \rho$, this proves the lower bound on 
	$\lambda^*(\rho)$. To get the upper bound, let $ \textbf{1} = [1 ~ 1~  ~1 \dots 1]^T$ 
	be the vector of ones.	We will show that 
			\begin{equation}
				 \bbM_\rho^{2n} \textbf{1} < \rho^2 \textbf{1}
				 \label{componentwise_ineq}
		   \end{equation}		 
	where the notation $``<"$ denotes the componentwise inequality for vectors. Then, by \cite[Corollary 8.1.29]{HorJoh91}, this would imply 
	         $$\lambda^*(\rho)^{2n} < \rho^2$$
	which is equivalent to the desired upper bound. It is a straightforward computation to show that if we set $\bbd^n = \textbf{1}$, then after a simple induction argument we obtain $d^{n} = \rho$ and $d^{n+1} < \rho$,  $d^{n+2} < \rho$, \dots, $d^{2n-1}<\rho$, i.e. 
\begin{equation}
 \bbd^{2n}	=	\begin{bmatrix}
    d^{2n-1}        \\
    d^{2n-2} \\
    \hdots \\
    d^{n}
\end{bmatrix} = \bbM_\rho^{n} \bbd^n = \bbM_\rho^{n} \textbf{1} = \rho \bbv 
\end{equation}
for a vector $\bbv = [v_1, v_2, \dots, v_n]^T$, where $v_i < 1$ if $i<n$ and $v_n = 1$. Using similar arguments we can write 
\begin{equation}
\bbd^{3n}	=	\begin{bmatrix}
    d^{3n-1}        \\
    d^{3n-2} \\
    \hdots \\
    d^{2n}
\end{bmatrix} = \bbM_\rho^{2n} \bbd^n = \bbM_\rho \bbM_\rho^{n} \textbf{1} = \rho \bbM_\rho \bbv 
\end{equation}
and a straightforward computation shows that $\bbM_\rho \bbv < \rho \textbf{1}.$ Combining this inequality with the previous equation proves \eqref{componentwise_ineq} and concludes the proof.
	\item [$(iv)$] It follows from \eqref{char_polynomial} that the roots of the polynomial $h$ is the same as the roots of the polynomial $T$ except that $h$ has an additional root at 1. By part $(i)$ and $(iii)$, $\lambda^*(\rho)$ is the largest real root of $T$ and $0<\lambda^*(\rho)<1$. Therefore, it is also the largest real root of the function $h$ over the interval $(0,1)$ which is equal to $\gamma_0$.   
\end{itemize}


\bibliographystyle{siamplain}
\bibliography{bmc_article,bmc_article2,bibliography,bibliography2}
\end{document}